\newtheorem{theorem}{Theorem}
\newtheorem{lemma}[theorem]{Lemma}
\newtheorem{corollary}[theorem]{Corollary}
\newtheorem{prop}[theorem]{Proposition}
\newtheorem{remark}[theorem]{Remark}
\newcommand{\nn}{\nonumber}
\newcommand{\na}[1]{\noalign{\noindent #1}}
\renewcommand{\ni}{\noindent}
\newcommand{\R}{\ensuremath{\mathbb{R}}}
\newcommand{\C}{\ensuremath{\mathbb{C}}}
\newcommand{\Z}{\ensuremath{\mathbb{Z}}}
\newcommand{\imag}[1]{\ensuremath{\mathrm{Im}(#1)}}
\newcommand{\real}[1]{\ensuremath{\mathrm{Re}(#1)}}
\newcommand{\floor}[1]{\ensuremath{\lfloor #1\rfloor}}
\newcommand{\helmn}{\ensuremath{\left(\Delta +
      k^{2}n^{2}\right)}}
\newcommand{\helm}{\ensuremath{\left(\Delta +
      k^{2}\right)}}
\newcommand{\bdry}{\partial}
\newcommand{\ddnu}[1]{\ensuremath{\frac{\partial
      #1}{\partial\nu}}}
\newcommand{\restricted}[1]{\ensuremath{\Big|_{#1}}}
\renewcommand{\bar}[1] {\ensuremath{\overline{#1}}}
\newcommand{\halfL}{\ensuremath{{\frac{L}{2}}}}
\newcommand{\half}{\ensuremath{{\frac{1}{2}}}}
\newcommand{\espace}{\vspace*{2mm}}
\newcommand{\jm} {\ensuremath{\frac{j}{m}}}
\newcommand{\jpm} {\ensuremath{\frac{j+1}{m}}}
\newcommand{\jmm} {\ensuremath{\frac{j-1}{m}}}
\newcommand{\sjm} {\ensuremath{S_{\frac{j}{m}}}}
\newcommand{\sjmm} {\ensuremath{S_{\frac{j-1}{m}}}}
\newcommand{\ijm} {\ensuremath{I_{\frac{j}{m}}}}
\newcommand{\ijmm} {\ensuremath{I_{\frac{j-1}{m}}}}
\newcommand{\Pmp} {\ensuremath{p_{m}^{o}}}
\newcommand{\Pmm} {\ensuremath{p_{m}^{e}}}
\newcommand{\Pm} {\ensuremath{p_{m}}}
\newcommand{\pb} {\ensuremath{p_{\beta}}}
\newcommand{\jpfjm} {\ensuremath{j+\floor{\jm}}}
\numberwithin{equation}{section}
\begin{document}

\title{Transmission Eigenvalues in One Dimension}

\author{{John Sylvester
\thanks{Department of Mathematics,University of Washington,
 Seattle, Washington 98195,USA. (sylvest@uw.edu). Research
 was supported in part by NSF grant  DMS-1007447}}}

 \date{}

\maketitle

\thispagestyle{empty}

\begin{abstract}
We show how to locate all the transmission eigenvalues for
a one dimensional constant index of refraction on an interval.
\end{abstract}

\section{Introduction}

The scattering operator for the time harmonic Helmholtz
equation with compactly supported index of refraction
\(n(x)\) maps the asymptotics of incident waves (Herglotz
wave functions) \(v_{0}\) to the asymptotics of (outgoing)
scattered waves \(u^{+}\) . Both \(u^{+}\) and \(v^{0}\)
are defined in \(\R^{n}\) and satisfy
\begin{eqnarray}
  \nn
 & \helmn u^{+} = (1-n^{2})k^{2}v_{0}&
\\\nn
&\helm v_{0} = 0&
\end{eqnarray}
in  all of \(\R^{n}\). 

If the scattering operator has a zero eigenvalue, we say
that \(k^{2}\) is a transmission eigenvalue
\cite{MR983987}, \cite{MR934695}. It follows from
Rellich's theorem and unique continuation that the
scattered wave \(u^{+}\) is identically zero outside any
domain \(D\) that contains the support of \(n\), and
therefore that  \(u=u^{+}\) and  \(v=v^{0}\) are a pair of
nontrivial functions satisfying
\begin{eqnarray}
  \label{eq:28}
 & \helmn u = (1-n^{2})k^{2}v&\qquad \mathrm{in }\ D
\\\label{eq:7}
&\helm v = 0& \qquad \mathrm{in }\ D
\\\label{eq:31}
&u\restricted {\bdry D} = 0 \qquad
\ddnu{u}\restricted{\bdry D}= 0&
\end{eqnarray}

Whenever such a nontrivial pair exists, we say that
 \(k^{2}\) is an interior transmission
eigenvalue for the pair \((D,n)\).\footnote{See  section 8
  of \cite{MR2986407}, as well as \cite{MR2438784}, for
  more details about the connection between transmission
  eigenvalues and interior transmission eigenvalues}
\begin{remark}
  If we scale  \(v\) by introducing  \(w = k^{2}v\) , then the
  \(k^{2}\) on the right hand side of equation
  (\ref{eq:28}) disappears, and we see that the interior
  transmission eigenvalue problem is a true generalized
  eigenvalue problem which can be written as 
  \begin{eqnarray}
    \nn
    \Delta u - (1-n^{2})w &=& -k^{2}n^{2}u
    \\\nn
    \Delta w             &=& -k^{2}w
  \end{eqnarray}
with the same boundary conditions as in (\ref{eq:31}) \cite{MR2888291}.
\end{remark}
The analogous interior problem for obstacle scattering is
the Dirichlet problem,
\begin{eqnarray}
  \nn
  &\helm v = 0& \qquad \mathrm{in }\ D
\\\nn
&v\restricted {\bdry D} = 0&
\end{eqnarray}
and the Dirichlet eigenvalues are fairly well understood. In
particular, the Dirichlet eigenvalues of a one dimensional
interval of length  \(L\)  are exactly
\(\frac{n^{2}\pi^{2}}{L^{2}}\). The one dimensional interior
transmission eigenvalue problem, even for a constant index
of refraction, is not as simple.  In one dimension, with
the index of refraction  \(n(x)\) equal to the positive constant
\(\sigma\), equations (\ref{eq:28})~(\ref{eq:7})~(\ref{eq:31}) 
become
\begin{eqnarray}
  \label{eq:1}
  &u'' + k^{2}\sigma^{2}u = k^{2}(1-\sigma^{2})v&
\\ \label{eq:2}
&v'' + k^{2} v = 0&
\\\label{eq:3}
&u(\pm\halfL)= 0 \qquad u'(\pm\halfL) = 0&
\end{eqnarray}
so that  \(k^{2}\) is an interior transmission eigenvalue if
and only there exist a nontrivial pair of eigenfunctions
\((u,v)\) satisfying (\ref{eq:1})(\ref{eq:2})(\ref{eq:3}).\\

We will show in proposition \ref{sec:analyticEquation}
that \(k^{2}\) is an interior transmission eigenvalue
if and only if \(k\) is a root of  the 
equation
\begin{eqnarray}
  \label{eq:6}
  (\sigma-1)^{2}\sin^{2}((\sigma+1)\frac{kl}{2}) -
  (\sigma+1)^{2}\sin^{2}((\sigma-1)\frac{kl}{2}) = 0
\end{eqnarray}
and the two theorems below describe the roots of
(\ref{eq:6}), which we will call the interior transmission
wavenumbers (positive square roots of interior transmission eigenvalues).

\begin{theorem}\label{sec:imaginaryK}
  The interior transmission wavenumbers satisfy
  \begin{eqnarray}
    \nn
    |\imag{k}| \le \frac{2}{L}\log{\frac{3\sigma+1}{|\sigma-1|}}
  \end{eqnarray}
\end{theorem}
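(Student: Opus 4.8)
The plan is to work directly with the characteristic equation~(\ref{eq:6}) supplied by Proposition~\ref{sec:analyticEquation}: every interior transmission wavenumber is a root of~(\ref{eq:6}), so it suffices to bound $|\imag{k}|$ over the roots of that equation.

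First I would factor the left side of~(\ref{eq:6}) as a difference of two squares, so that a root $k$ must satisfy $(\sigma-1)\sin\!\bigl((\sigma+1)\tfrac{kL}{2}\bigr)=\pm(\sigma+1)\sin\!\bigl((\sigma-1)\tfrac{kL}{2}\bigr)$; taking moduli in either alternative produces the single relation
\[
  |\sigma-1|\;\bigl|\sin\!\bigl((\sigma+1)\tfrac{kL}{2}\bigr)\bigr|=(\sigma+1)\;\bigl|\sin\!\bigl((\sigma-1)\tfrac{kL}{2}\bigr)\bigr|.
\]
Next I would set $t=\tfrac{L}{2}\,|\imag{k}|$, so that the two sine arguments have imaginary parts of absolute value $(\sigma+1)t$ and $|\sigma-1|t$. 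Using the elementary sandwich $|\sinh(\imag z)|\le|\sin z|\le\cosh(\imag z)$ (immediate from $|\sin(x+iy)|^{2}=\sin^{2}x+\sinh^{2}y$), I would bound the left member of the displayed relation below by $|\sigma-1|\sinh((\sigma+1)t)$ and the right member above by $(\sigma+1)\cosh(|\sigma-1|t)$, obtaining
\[
  |\sigma-1|\,\sinh\!\bigl((\sigma+1)t\bigr)\le(\sigma+1)\,\cosh\!\bigl(|\sigma-1|t\bigr).
\]
The decisive structural point is that $\sigma+1>|\sigma-1|$ for every $\sigma>0$, so the left side grows strictly faster than the right, and this inequality confines $t$ to a finite interval. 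An explicit bound follows by bounding $\cosh$ above and $\sinh$ below by single exponentials (using $e^{-x}\le 1$) and comparing exponents; this gives $t\le\log\frac{3\sigma+1}{|\sigma-1|}$, and hence the asserted estimate $|\imag{k}|=\tfrac{2}{L}t\le\tfrac{2}{L}\log\frac{3\sigma+1}{|\sigma-1|}$.

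The step I expect to be the main obstacle is this last one: the transcendental inequality $|\sigma-1|\sinh((\sigma+1)t)\le(\sigma+1)\cosh(|\sigma-1|t)$ has no closed-form solution, so the exponential estimates must be chosen carefully enough that the resulting bound is sharp enough to match the stated constant and remains uniform across the two regimes $\sigma>1$ and $0<\sigma<1$ (in which $\sigma+1-|\sigma-1|$ takes the distinct values $2$ and $2\sigma$). Everything else—the factorization, passing to moduli, and the $\sin$/$\sinh$ comparison—is routine. If a uniform treatment turns out to be awkward, one can instead establish the $\sigma>1$ case directly and then invoke the invariance of~(\ref{eq:6}) under the substitution $\sigma\mapsto 1/\sigma$, $k\mapsto\sigma k$ to transfer the estimate to $0<\sigma<1$.
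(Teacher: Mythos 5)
Your argument is, in substance, the paper's own proof transplanted from the normalized variables back to $k$: after the change of variables (\ref{eq:22}) the paper takes a root $z=x+iy$ of $\sin(m\pi z)=\pm m\sin(\pi z)$ and writes $e^{m\pi|y|}-e^{-m\pi|y|}\le 2|\sin(m\pi z)|=2m|\sin(\pi z)|\le m\,(e^{\pi|y|}+e^{-\pi|y|})$, divides by $e^{\pi|y|}$, bounds the decaying exponentials by $1$, and takes logarithms to get (\ref{eq:21}) --- exactly your $\sinh\le|\sin|\le\cosh$ sandwich followed by the single-exponential comparison. For $\sigma>1$ your computation closes correctly: dividing by $e^{(\sigma-1)t}$ gives $e^{2t}\le\frac{3\sigma+1}{\sigma-1}$, i.e.\ $|\imag{k}|\le\frac{1}{L}\log\frac{3\sigma+1}{\sigma-1}$, which is even stronger than the stated bound and is the same estimate one gets by rescaling (\ref{eq:21}).

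The obstacle you flagged for $0<\sigma<1$ is real, and it is not a matter of choosing the exponential estimates more cleverly: there the exponent gap in your displayed inequality is $(\sigma+1)-|\sigma-1|=2\sigma$, so the same manipulation yields only $e^{2\sigma t}\le\frac{\sigma+3}{1-\sigma}$, i.e.\ $t\le\frac{1}{2\sigma}\log\frac{\sigma+3}{1-\sigma}$, and no refinement will produce $t\le\log\frac{3\sigma+1}{1-\sigma}$. Your fallback via $k_{j}(1/\sigma)=\sigma k_{j}(\sigma)$ (the paper's own footnote device) transfers the $\sigma>1$ estimate to $|\imag{k}|\le\frac{2}{\sigma L}\log\frac{\sigma+3}{1-\sigma}$, again with a $1/\sigma$ prefactor rather than the literal constant; the same is true of the paper's proof, since (\ref{eq:21}) is established for $m>1$ and rescales to the advertised constant only when $\sigma>1$. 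Indeed, for small $\sigma$ the nonreal roots of (\ref{eq:10})--(\ref{eq:11}) have $|\imag{z}|$ comparable to the bound in (\ref{eq:21}), which after rescaling stays of order $\frac{1}{L}$ while $\frac{2}{L}\log\frac{3\sigma+1}{1-\sigma}\to 0$, so the literal constant cannot hold uniformly in that regime. The right resolution is the one you half-propose: prove the estimate for $\sigma>1$ exactly as you did (this coincides with the paper), and for $\sigma<1$ quote the symmetry $\sigma\mapsto 1/\sigma$, $k\mapsto\sigma k$ with the correspondingly rescaled constant, rather than hunting for a uniform derivation of the bound as stated.
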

\goodbreak

In theorem \ref{sec:itw-main} below, the notation 
\(\floor{\jm} \) denotes the \textit{floor}, the greatest integer
less than or equal to  \(\frac{j}{m}\). The condition    
\(\floor{\frac{j}{m}}=\floor{\frac{j+1}{m}}\) means that
there is no integer between \jm\ and \jpm, and  \(\Z\)
denotes the integers.\\

\begin{theorem}\label{sec:itw-main}
 
Let \(m=\frac{\sigma+1}{\sigma-1}\), and let  \(j\in\Z\)
  \begin{enumerate}
  \item If  \(\frac{j}{m}\not\in\Z\) and
    \(\frac{j+1}{m}\not\in\Z\), there are exactly two simple
    interior transmission wavenumbers in the strip
    \begin{eqnarray}
      \nn
      \frac{2j\pi}{(\sigma+1)L}<\real{k}<\frac{2(j+1)\pi}{(\sigma+1)L}
    \end{eqnarray}
    \begin{enumerate}
    \item If \(\floor{\frac{j}{m}}\ne\floor{\frac{j+1}{m}}\),
        both  are real
      \item If
        \(\floor{\frac{j}{m}}=\floor{\frac{j+1}{m}}\) they
         are complex conjugates with nonzero imaginary parts
    \end{enumerate}

  \item If \(\frac{j}{m}\in\Z\), there is a quadruple
    interior transmission wavenumber
    at \(k=\frac{j}{m}\) and no others in the strip
    \begin{eqnarray}
      \nn
      \frac{2(j-1)\pi}{(\sigma+1)L}<\real{k}<\frac{2(j+1)\pi}{(\sigma+1)L}
         \end{eqnarray}
  \end{enumerate}
\end{theorem}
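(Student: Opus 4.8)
The plan is to factor the left side of (\ref{eq:6}), reduce it to counting zeros of one entire function, carry out that count strip by strip with the argument principle, and then separate the real roots from the nonreal ones by an elementary sign analysis on the real axis. First set $t=(\sigma-1)\tfrac{kL}{2}$ and $m=\tfrac{\sigma+1}{\sigma-1}$, so that $(\sigma+1)\tfrac{kL}{2}=mt$ and $|m|>1$; completing the difference of squares turns the left side of (\ref{eq:6}) into $(\sigma-1)^{2}g_{+}(t)g_{-}(t)$ with
\[
  g_{\pm}(t)=\sin(mt)\pm m\sin t .
\]
Since $k\mapsto t$ is affine, the interior transmission wavenumbers correspond with multiplicity to the zeros of $g_{+}g_{-}$, and the strip $\tfrac{2j\pi}{(\sigma+1)L}<\real{k}<\tfrac{2(j+1)\pi}{(\sigma+1)L}$ becomes $S_{j}=\{\,t:\ j\pi<\real{mt}<(j+1)\pi\,\}$. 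Replacing $m$ by $-m$ only multiplies $g_{\pm}$ by $-1$ and relabels the strips, so I may assume $\sigma>1$, i.e.\ $m>1$; and I fix $C$ larger than the ($t$-space form of the) bound of Theorem~\ref{sec:imaginaryK}, so that every zero of $g_{+}g_{-}$ has $|\imag{t}|<C$.

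\emph{The count.} I would apply the argument principle to $g_{+}g_{-}(t)=\sin^{2}(mt)-m^{2}\sin^{2}t$ on the rectangle $R_{j}=\{\,j\pi<\real{mt}<(j+1)\pi,\ |\imag{t}|<C\,\}$. On the two horizontal sides $|\imag{t}|=C$ the factor $|\sin(mt)|\ge\sinh(mC)$ dominates $m|\sin t|\le m\cosh C$, so $g_{+}g_{-}=\sin^{2}(mt)\bigl(1-m^{2}\sin^{2}t/\sin^{2}(mt)\bigr)$ with the parenthesis within $o(1)$ of $1$; since $\sin^{2}(mt)$ is nonvanishing there and its argument (twice that of $\sin(mt)$) winds by $2\pi$ along each horizontal side, $\arg(g_{+}g_{-})$ changes by $2\pi+o(1)$ along each. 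On the two vertical sides $\real{mt}\in\{j\pi,(j+1)\pi\}$, writing $t=\tfrac{p\pi}{m}+iy$ with $p\in\{j,j+1\}$ one computes
\[
  \real{g_{+}g_{-}}\;=\;-\sinh^{2}(my)-m^{2}\sin^{2}(\tfrac{p\pi}{m})\,\cosh^{2}y+m^{2}\cos^{2}(\tfrac{p\pi}{m})\,\sinh^{2}y\;\le\;-m^{2}\sin^{2}(\tfrac{p\pi}{m})\;<\;0 ,
\]
using the elementary inequality $\sinh^{2}(my)\ge m^{2}\sinh^{2}y$ (valid for $m\ge1$) together with $\sin(\tfrac{p\pi}{m})\ne0$ --- which is exactly where the hypothesis $j/m,(j+1)/m\notin\Z$ enters. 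So $g_{+}g_{-}$ stays in the open left half-plane along each vertical side, and since at the corners $\sin(mt)=\pm i\sinh(mC)$ puts $g_{+}g_{-}$ within $o(1)$ of the negative real axis, its argument changes by $o(1)$ along each. Hence the total winding of $g_{+}g_{-}$ around $\partial R_{j}$ is $4\pi+o(1)$, and being $2\pi$ times an integer it equals $4\pi$ once $C$ is large; so $g_{+}g_{-}$ has exactly two zeros in $S_{j}$. For part~2, where $j/m\in\Z$, I would run the identical argument on the doubled rectangle $\{(j-1)\pi<\real{mt}<(j+1)\pi,\ |\imag{t}|<C\}$ --- whose vertical sides are still admissible because $\tfrac{j\pm1}{m}=\tfrac jm\pm\tfrac1m\notin\Z$ --- obtaining total winding $8\pi$ and hence exactly four zeros there.

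\emph{Real versus nonreal in part~1.} On $I_{j}=(\tfrac{j\pi}{m},\tfrac{(j+1)\pi}{m})$ one has $g_{\pm}(\tfrac{j\pi}{m})=\pm m\sin(\tfrac{j\pi}{m})$ and $g_{\pm}(\tfrac{(j+1)\pi}{m})=\pm m\sin(\tfrac{(j+1)\pi}{m})$, of signs $\pm(-1)^{\floor{j/m}}$ and $\pm(-1)^{\floor{(j+1)/m}}$. If $\floor{j/m}\ne\floor{(j+1)/m}$, each of $g_{+}$ and $g_{-}$ changes sign across $I_{j}$ and so has a real zero there; a common zero of $g_{+}$ and $g_{-}$ would force $\sin t=\sin(mt)=0$, which cannot happen inside $I_{j}$ here, so these are two distinct real zeros, which by the count of two are all the zeros in $S_{j}$ and are simple. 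If $\floor{j/m}=\floor{(j+1)/m}$, then $\sin t$ does not vanish on $I_{j}$ and $g_{+},g_{-}$ each take the same sign at the two ends of $I_{j}$, hence have an even number of real zeros there; a double real zero would again force $\sin t=0$, and two simple real zeros of (say) $g_{+}$ are impossible too, for Rolle's theorem would produce $c\in I_{j}$ with $g_{+}'(c)=0$, i.e.\ $\cos(mc)=-\cos c$, whence $\sin(mc)=\pm\sin c$ and $g_{+}(c)\in\{(m+1)\sin c,\,(m-1)\sin c\}$ --- which has the sign of $\sin c$, hence the same sign $g_{+}$ takes at the endpoints of $I_{j}$, contradicting that $c$ lies strictly between two zeros of $g_{+}$; likewise $g_{-}(c)\in\{(1-m)\sin c,\,-(1+m)\sin c\}$ has the opposite sign to $\sin c$, again matching $g_{-}$'s endpoint values, which rules out two real zeros of $g_{-}$. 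So $g_{+}g_{-}$ has no real zero in $S_{j}$, and its two zeros there, being conjugate-symmetric and nonreal, form a distinct pair of simple complex-conjugate roots.

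\emph{The quadruple root in part~2.} When $\ell:=j/m\in\Z$, at $t_{0}=\ell\pi$ --- which corresponds to $\real{k}=\tfrac{2j\pi}{(\sigma+1)L}$, the common edge of the strips $j-1$ and $j$ --- one has $g_{\pm}(t_{0})=\sin(j\pi)\pm m\sin(\ell\pi)=0$ and $g_{\pm}'(t_{0})=m\bigl((-1)^{j}\pm(-1)^{\ell}\bigr)$, exactly one of which vanishes, according as $j$ and $\ell$ have equal or opposite parity. In the equal-parity case $g_{-}'(t_{0})=0$, and then automatically $g_{-}''(t_{0})=-m^{2}\sin(j\pi)+m\sin(\ell\pi)=0$ while $g_{-}'''(t_{0})=(-1)^{\ell}m(1-m^{2})\ne0$, so $g_{-}$ has a zero of order exactly $3$ at $t_{0}$ and $g_{+}$ a simple one; their product has a zero of order $4$ there, which by the count of four on the doubled rectangle is the only zero in that strip (the opposite-parity case just interchanges $g_{+}$ and $g_{-}$). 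I expect the crux of the whole argument to be the winding count --- in particular recognizing that $\real{g_{+}g_{-}}$ is negative on the vertical sides, which both keeps $g_{+}g_{-}$ off the origin there and pins the vertical contributions down to $o(1)$ --- together with the stationary-point identities $g_{+}(c)\in\{(m+1)\sin c,(m-1)\sin c\}$ and $g_{-}(c)\in\{(1-m)\sin c,-(1+m)\sin c\}$, which are what force the two wavenumbers in a strip to be genuinely complex exactly when $\floor{j/m}=\floor{(j+1)/m}$.
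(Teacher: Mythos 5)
Your proof is correct, but it takes a genuinely different route from the paper's. The paper factors equation (\ref{eq:6}) into the same two entire functions (\Pmp\ and \Pmm\ in its notation), but then counts their roots by a homotopy: it embeds \Pmp\ in the family \(\pb=\sin(m\pi z)-\beta\sin(\pi z)\), starts at \(\beta=0\) where the roots are the simple points \(\jm\), computes \(dz_{j}/d\beta\) to see which strip each root enters (lemma \ref{sec:roots-p_m-2}), uses lemma \ref{sec:realsin} to show that no root can ever cross a strip boundary, and finally decides which roots are real at \(\beta=m\) by letting \(\beta\to\infty\), where every root must either converge to an integer or escape every compact set. You replace the homotopy by a single application of the argument principle to the product \(g_{+}g_{-}\) on each rectangle, and you replace the \(\beta\to\infty\) limit by an endpoint-sign count together with a Rolle argument on the real axis. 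The two proofs share their key ingredients in disguise: your computation that \(\real{g_{+}g_{-}}<0\) on the vertical sides is a quantitative form of the paper's lemma \ref{sec:realsin} (no roots on the lines \(\real{mt}\in\pi\Z\)), now strong enough to also pin the vertical contributions to the winding down to \(o(1)\); and your observation that a double real zero forces \(\sin t=0\) is exactly the paper's lemma \ref{sec:beta-roots-simple} specialized to \(\beta=m\). What your version buys is a static, self-contained count --- no appeal to continuity of roots in a parameter is needed, and the winding number delivers the multiplicity-four statement in part 2 with no extra bookkeeping; your Rolle identities \(g_{+}(c)\in\{(m+1)\sin c,(m-1)\sin c\}\) are also a nice purely real-variable substitute for the asymptotic argument. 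What the paper's version buys is the dynamic picture of roots colliding on the real axis and splitting into conjugate pairs as \(\beta\) grows, which it reuses almost verbatim for the Born limit in section \ref{sec:some-limiting-cases}. One small remark: you do not actually need theorem \ref{sec:imaginaryK} to choose \(C\), since the winding equals \(4\pi\) for every sufficiently large \(C\), so letting \(C\to\infty\) already accounts for all roots in the strip.
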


\goodbreak
The transmission wavenumbers for two  \(\sigma\)'s, with  \(L=2\),  are
plotted below. The vertical blue lines indicate the sets
\(\real{k} = \frac{2j\pi}{(\sigma+1)L}\).  The roots that
fall on these lines are quadruple.

\begin{center}

\includegraphics[width=\linewidth]{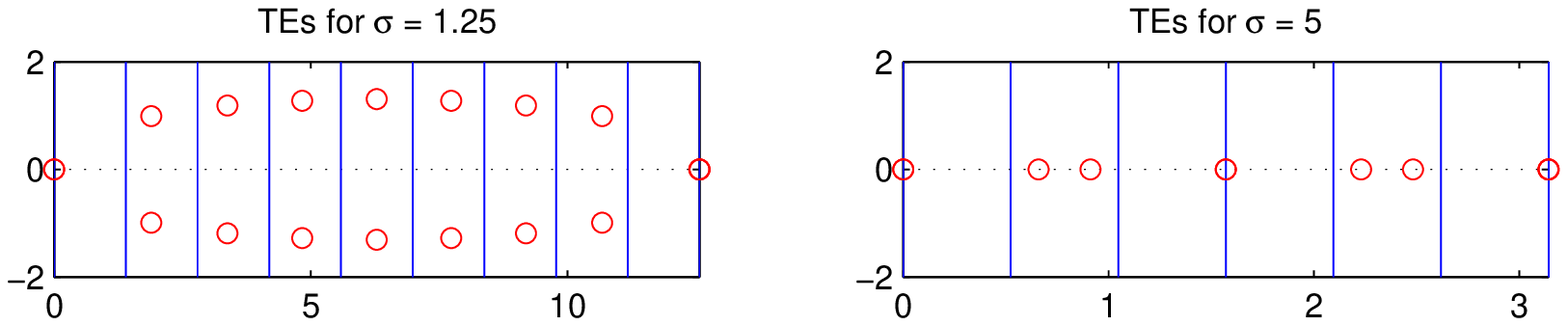}
\end{center}

As these  \(\sigma\)'s are rational, the transmission
eigenvalues are periodic. Remark \ref{sec:remark}
below explains how we did the computation.

\section{Lemmas and Proofs}
\label{sec:algebraic-equation}

Because we are dealing with constant coefficients, the
transmission wavenumbers  \(k\) are the zeros of the analytic
function on the left hand side of equation (\ref{eq:6}). 
This analytic function has two factors, and our proofs
simplify if we treat each factor separately. 

\begin{prop}\label{sec:analyticEquation}
  \(k\) is a transmission wavenumber if, and only if,  \(k\) satisfies
  \begin{eqnarray}
    \label{eq:13}
    (\sigma-1)\sin((\sigma+1)\frac{kl}{2})
    = (\sigma+1)\sin((\sigma-1)\frac{kl}{2})
\\\na{or}\label{eq:14}
   (\sigma-1)\sin((\sigma+1)\frac{kl}{2})
   = -(\sigma+1)\sin((\sigma-1)\frac{kl}{2})
  \end{eqnarray}
  The wavenumber \(k\) satisfies (\ref{eq:13}) if there
  exists an eigenfunction pair \((u,v)\) of odd functions
  of \(x\), and \(k\) satisfies (\ref{eq:14}) if there
  exists an eigenfunction pair \((u,v)\) of even
  functions.
\end{prop}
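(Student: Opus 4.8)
The plan is to write down the general solution of \eqref{eq:1}--\eqref{eq:2} explicitly, use the reflection symmetry $x\mapsto -x$ to split into even and odd eigenfunctions, and in each parity class turn the four boundary conditions \eqref{eq:3} into the vanishing of a $2\times2$ determinant, which a product-to-sum identity rewrites as \eqref{eq:13} or \eqref{eq:14}. Throughout we may assume $\sigma\neq 1$ (otherwise $1-\sigma^{2}=0$ and the problem is vacuous) and $k\neq 0$ (the case $k=0$ is degenerate and checked directly).

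First I would solve \eqref{eq:2}: $v=A\cos kx+B\sin kx$. Since $v''=-k^{2}v$, the function $-v$ is a particular solution of \eqref{eq:1}, because $(-v)''+k^{2}\sigma^{2}(-v)=k^{2}v-k^{2}\sigma^{2}v=k^{2}(1-\sigma^{2})v$; hence, using $\sigma\neq 1$, the general solution of \eqref{eq:1}--\eqref{eq:2} is $v=A\cos kx+B\sin kx$ and $u=C\cos(\sigma kx)+D\sin(\sigma kx)-A\cos kx-B\sin kx$, a four-parameter family. Next I would invoke symmetry: the interval and the (constant) coefficients are invariant under $x\mapsto-x$, so if $(u(x),v(x))$ satisfies \eqref{eq:1}--\eqref{eq:3} then so does $(u(-x),v(-x))$, and therefore so do the even and odd parts $\tfrac12(u(x)\pm u(-x))$, $\tfrac12(v(x)\pm v(-x))$; a nontrivial pair has a nonzero even part or a nonzero odd part, so $k$ is a transmission wavenumber iff there is a nontrivial eigenpair that is purely even or purely odd. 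This also sorts out which of \eqref{eq:13}, \eqref{eq:14} appears.

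In the odd case we take $A=C=0$, so $v=B\sin kx$ and $u=D\sin(\sigma kx)-B\sin kx$; by oddness the conditions at $-L/2$ coincide with those at $L/2$, and \eqref{eq:3} (dividing the derivative equation by $k$) becomes
\[
\begin{pmatrix}\sin(\sigma kL/2) & -\sin(kL/2)\\ \sigma\cos(\sigma kL/2) & -\cos(kL/2)\end{pmatrix}\begin{pmatrix}D\\ B\end{pmatrix}=0 .
\]
This has a nontrivial solution $(D,B)$ — equivalently a nontrivial odd eigenpair, since $B=0$ would force $D=0$ — precisely when the determinant $\sigma\cos(\sigma kL/2)\sin(kL/2)-\sin(\sigma kL/2)\cos(kL/2)$ vanishes. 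Similarly, in the even case $B=D=0$, $v=A\cos kx$, $u=C\cos(\sigma kx)-A\cos kx$, and \eqref{eq:3} gives the analogous homogeneous system in $(C,A)$ with determinant $\cos(\sigma kL/2)\sin(kL/2)-\sigma\sin(\sigma kL/2)\cos(kL/2)$. Expanding both determinants with $\sin\alpha\cos\beta=\tfrac12[\sin(\alpha+\beta)+\sin(\alpha-\beta)]$ and $\cos\alpha\sin\beta=\tfrac12[\sin(\alpha+\beta)-\sin(\alpha-\beta)]$, taking $\alpha=\sigma kL/2$ and $\beta=kL/2$, the odd determinant collapses to $(\sigma-1)\sin((\sigma+1)kL/2)-(\sigma+1)\sin((\sigma-1)kL/2)=0$, i.e.\ \eqref{eq:13}, and the even determinant collapses to $(\sigma-1)\sin((\sigma+1)kL/2)+(\sigma+1)\sin((\sigma-1)kL/2)=0$, i.e.\ \eqref{eq:14}.

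There is no deep obstacle here; the main things to be careful about are the justification that the parity reduction loses nothing (the reflection argument above) and the sign bookkeeping in the product-to-sum step, so that the odd and even cases land on \eqref{eq:13} and \eqref{eq:14} respectively rather than being swapped. The degenerate values $\sigma=1$ and $k=0$ should be disposed of at the outset, as noted.
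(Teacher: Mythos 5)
Your proposal is correct and follows essentially the same route as the paper: decompose into odd and even parts by the reflection symmetry, write the explicit solutions in each parity class, reduce the Cauchy data at $x=L/2$ to the vanishing of a $2\times2$ determinant (the paper eliminates the single constant $A$ after normalizing $v$, which is the same condition), and convert to \eqref{eq:13} and \eqref{eq:14} by the product-to-sum (``beats'') identity. The only differences are cosmetic: you keep both coefficients and phrase the step as a determinant, and you explicitly dispose of the degenerate cases $\sigma=1$ and $k=0$, which the paper leaves implicit.
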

\begin{proof}
If a pair \(u,v\) satisfies (\ref{eq:1}-\ref{eq:3}), then so do
their odd and even parts, and the calculation is a little
simpler for each of these parts. The odd parts satisfy
(\ref{eq:1}-\ref{eq:2}) with (\ref{eq:3}) replaced by 
\begin{eqnarray}
  \nn
  &u(0)= v(0) = 0 \qquad u(\halfL) = u'(\halfL) = 0
\end{eqnarray}
and the even parts satisfy (\ref{eq:1}-\ref{eq:2}) with
(\ref{eq:3}) replaced by 
\begin{eqnarray}
  \nn
  &u'(0)= v'(0) = 0 \qquad u(\halfL) = u'(\halfL) = 0
\end{eqnarray}
In the odd case, the boundary conditions at zero imply
that
\begin{eqnarray}
  \nn
  &v = \sin(kx)&
  \\\na{ and}
  &u = -\sin(kx) + A\sin(k\sigma x)
\end{eqnarray}
for some constant \(A\). The boundary conditions
at \(x=\halfL\) imply
 that

\begin{eqnarray}
  \label{eq:8}
  \sin(\sigma k\halfL)\cos(k\halfL) =
          \sigma\sin(k\halfL)\cos(\sigma k\halfL)
\end{eqnarray}
 Noting that the left and right
hand sides are the \textit{beats} representation of the
combination of the frequencies \((\sigma+1)\halfL\) and
\((\sigma-1))\halfL\) suggests that we rewrite (\ref{eq:8}) as
\begin{eqnarray}
  \nn
  (\sigma-1)\sin((\sigma+1))\halfL k) = (\sigma+1)\sin((\sigma-1)\halfL k)
\end{eqnarray}
which is (\ref{eq:13}). A similar calculation shows the
existence of even transmission eigenfunction pairs if and
only if \(k\) satisfies (\ref{eq:14}).
\end{proof}

If we introduce the new variables
\begin{eqnarray}
  \label{eq:22}
  m = \frac{\sigma+1}{\sigma-1}
  \\\na{and}\nn
  z=(\sigma-1)k\frac{L}{2\pi}
\end{eqnarray}
then equations (\ref{eq:13}) and (\ref{eq:14}) become
equations for a  \(z\) whcih depend on
the single real parameter \(m\). They become
\begin{eqnarray}
\label{eq:10}
 p_{m}^{o}(z) := \sin(m\pi z) - m\sin(\pi z) = 0
\\\label{eq:11}
 p_{m}^{e}(z) := \sin(m\pi z) + m\sin(\pi z) = 0
\end{eqnarray}
where the superscripts refer to even and odd, respectively.
Because \(\sigma>0\), \(|m|>1\).  Because equations
(\ref{eq:10}) and (\ref{eq:11}) remain the same if we
change the sign of \(m\), we may also assume that
\(m>1\), and will do so throughout\footnote{Alternatively,
  we could start with the assumption that  \(\sigma>1\),
  and then use the relationship between the transmission
  wavenumbers of  \(\sigma\) and those of  \(\frac{1}{\sigma}\),  
  \(k_{j}(\frac{1}{\sigma})=\sigma k_{j}(\sigma)\) to
  show that our formulas and estimates apply to
  \(\sigma<1\) as well.}.
\begin{remark}\label{sec:remark}
  If  \(m=\frac{p}{q}\) is rational and we set  \(w =
  e^{\frac{i\pi z}{q}}\), then (\ref{eq:10}) becomes a
  polynomial equation in  \(w\). 
  \begin{equation}
    \label{eq:9}
    w^{2p} -1 = \frac{p}{q}\left(w^{p+q} - w^{p-q}\right)
  \end{equation}
  We can use a polynomial root finding algorithm to find
  the solutions  and then take logarithms and rescale
  to find the interior transmission wavenumbers
  \(k\). This is how we produced figure 1. 
\end{remark}

  With the notation introduced in
(\ref{eq:22}), theorem \ref{sec:imaginaryK} becomes

\begin{theorem}
  If  \(z\) satisfies (\ref{eq:10}) or (\ref{eq:11}), then
  \begin{eqnarray}\label{eq:21}
        |\imag{z}|\le\frac{\log(2m+1)}{\pi(m-1)}    
  \end{eqnarray}
\end{theorem}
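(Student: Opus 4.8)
The plan is to collapse the two equations into a single scalar inequality for \(\imag{z}\). Since \(\Pmp(z)\,\Pmm(z)=\sin^{2}(m\pi z)-m^{2}\sin^{2}(\pi z)\), any \(z\) solving (\ref{eq:10}) or (\ref{eq:11}) satisfies \(\sin(m\pi z)=\pm m\sin(\pi z)\), hence
\begin{eqnarray}
  \nn
  |\sin(m\pi z)| = m\,|\sin(\pi z)| .
\end{eqnarray}
Writing \(z=x+iy\) and using the identity \(|\sin(a+ib)|^{2}=\sin^{2}a+\sinh^{2}b\) for real \(a,b\), this becomes
\begin{eqnarray}
  \nn
  \sin^{2}(m\pi x)+\sinh^{2}(m\pi y) = m^{2}\sin^{2}(\pi x)+m^{2}\sinh^{2}(\pi y) .
\end{eqnarray}
Because (\ref{eq:10}) and (\ref{eq:11}) have real coefficients, their solution sets are invariant under \(z\mapsto\bar z\), so it is enough to prove the bound on \(y\) assuming \(y\ge 0\).

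Next I would simply discard information: drop the nonnegative term \(\sin^{2}(m\pi x)\) from the left side and bound \(\sin^{2}(\pi x)\le 1\) on the right, which gives
\begin{eqnarray}
  \nn
  \sinh^{2}(m\pi y) \le m^{2}\bigl(1+\sinh^{2}(\pi y)\bigr) = m^{2}\cosh^{2}(\pi y) .
\end{eqnarray}
For \(y\ge 0\) all quantities are nonnegative, so taking square roots yields \(\sinh(m\pi y)\le m\cosh(\pi y)\).

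It then remains to extract an explicit bound from \(\sinh(m\pi y)\le m\cosh(\pi y)\). Using the elementary estimates \(\sinh t\ge\tfrac12(e^{t}-1)\) and \(\cosh t\le e^{t}\) valid for \(t\ge 0\), this gives \(e^{m\pi y}-1\le 2m\,e^{\pi y}\); folding the additive constant into \(1\le e^{\pi y}\) yields \(e^{(m-1)\pi y}\le 2m+1\), and since \(m>1\) we may take logarithms to conclude \(0\le y\le\frac{\log(2m+1)}{\pi(m-1)}\). The case \(y<0\) follows from the conjugation symmetry noted above, which proves (\ref{eq:21}). The argument presents no real obstacle; the only delicate point is honest bookkeeping of the constant in this last step, since coarser exponential estimates would land on a bound slightly weaker than \(\log(2m+1)\).
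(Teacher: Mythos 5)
Your proof is correct and follows essentially the same route as the paper: from \(|\sin(m\pi z)|=m|\sin(\pi z)|\) both arguments reduce to the inequality \(\sinh(m\pi|y|)\le m\cosh(\pi|y|)\) (the paper writes it as \(e^{m\pi|y|}-e^{-m\pi|y|}\le m e^{\pi|y|}+m e^{-\pi|y|}\) via the triangle inequality, you via the exact identity \(|\sin(x+iy)|^{2}=\sin^{2}x+\sinh^{2}y\)), and then both extract \(e^{(m-1)\pi|y|}\le 2m+1\) by elementary manipulation of exponentials. The differences are purely cosmetic bookkeeping, and your constant matches the paper's exactly.
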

\begin{proof}
If \(z=x+iy\) satisfies (\ref{eq:10})
  \begin{eqnarray}
      \nn
      e^{m\pi |y|}- e^{-m\pi |y|}&\le&2|\sin m\pi z| = 2m|\sin\pi z| \le
      me^{\pi|y|}+ me^{-\pi|y|}
      \\\na{\noindent so that}\nn
       e^{m\pi|y|}&\le&  e^{-m\pi|y|} + me^{\pi|y|}+ me^{-\pi|y|}
       \\\na{\noindent Dividing both sides by  \(e^{\pi|y|}\) gives\espace}\nn
       e^{(m-1)\pi|y|}&\le&e^{-(m+1)\pi|y|} + m+ me^{-2\pi|y|}\le 2m+1
    \end{eqnarray}
Now taking the logarithm of both sides yields (\ref{eq:21})
\end{proof}

Locating the real parts of the roots of \(p_{m}(z) =
p_m^{o}(z)p_{m}^{e}(z)\) will take more work.  We will
show (roughly) that there are always two roots in certain of
strips of the complex plane. We define

 \begin{eqnarray}
   \nn
   I_{\jm} &=& \left\{x\ \Big\vert\ \ \jm<x<\jpm\right\}
\\\nn
    S_{\jm}&=& \left\{z\ \Big\vert\ \ \jm<\real{z}<\jpm\right\}
 \end{eqnarray}

A restatement of theorem \ref{sec:itw-main}, using the
notation of equation (\ref{eq:22}), is theorem
\ref{sec:roots-p_m} below.

 \begin{theorem}\label{sec:roots-p_m}
   Let  \(m>1\).
   \begin{enumerate}
   \item If neither \jm\  nor \jpm\  are integers, then
     \(p_{m}\) has two simple roots in the strip  \sjm\ . 
     \begin{enumerate}
     \item If the interval \ijm\ contains an integer  \(k\),  both
       roots are real, and one is bigger and the other
       smaller than  \(k\).
     \item If the interval \ijm\ does not contain an
       integer, the roots are complex conjugates (with
       nonzero imaginary parts).
     \end{enumerate}
   \item If \jm\ is an integer, \jm\ is a quadruple root
     of \Pm\ , and \Pm\ has no other roots in \sjm or \sjmm.
   \end{enumerate}
 \end{theorem}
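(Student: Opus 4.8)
The plan is to pin down the \emph{number} of zeros of $\Pm=\Pmp\Pmm=\sin^{2}(m\pi z)-m^{2}\sin^{2}(\pi z)$ in each strip by the argument principle, and then to settle real-versus-complex and the positions by elementary sign arguments together with one comparison inequality. First I would record the facts coming from $\sin^{2}+\cos^{2}=1$: $\Pmp,\Pmm,\Pm$ have real Taylor coefficients; a multiple zero $z_{0}$ of $\Pm$, or a common zero of $\Pmp$ and $\Pmm$, forces $\sin(\pi z_{0})=\sin(m\pi z_{0})=0$, i.e.\ $z_{0}\in\Z$ and $m z_{0}\in\Z$; and at such a $z_{0}$ one has $\Pm(z)=\Pm(z-z_{0})$ by periodicity, with $\Pm(w)=-\tfrac{m^{2}\pi^{4}(m^{2}-1)}{3}w^{4}+O(w^{6})$, so $z_{0}$ is actually a zero of order exactly $4$. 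Consequently, if neither \jm\ nor \jpm\ is an integer then \sjm\ contains no such $z_{0}$ --- the only integer that can lie in \ijm\ is a $k$ with $mk\in(j,j+1)$, hence $mk\notin\Z$ --- so $\Pm$ has only simple zeros, and no common zeros of $\Pmp,\Pmm$, in such an \sjm.

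Next I would prove the counting step. For $c\in\Z$ with $c/m\notin\Z$, a direct computation at $z=c/m+iy$ gives
\[
  \real{\Pm(c/m+iy)}=-\bigl[\sinh^{2}(m\pi y)-m^{2}\cos^{2}(\tfrac{\pi c}{m})\sinh^{2}(\pi y)\bigr]-m^{2}\sin^{2}(\tfrac{\pi c}{m})\cosh^{2}(\pi y),
\]
which is strictly negative for every real $y$: the bracket is $\ge 0$ since $\sinh^{2}(m\pi y)\ge m^{2}\sinh^{2}(\pi y)$ (because $\sinh(mt)/\sinh t$ increases from the value $m$ on $(0,\infty)$), and the last term is $<0$ since $\sin^{2}(\tfrac{\pi c}{m})>0$. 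Also $\imag{\Pm(c/m+iy)}=-m^{2}\sin(\tfrac{2\pi c}{m})\cosh(\pi y)\sinh(\pi y)$, which is of strictly smaller order than $\real{\Pm}$ as $|y|\to\infty$, so $\Pm(c/m+iy)/|\Pm(c/m+iy)|\to -1$. Now fix integers $a<b$ with $a/m,b/m\notin\Z$ and $Y$ larger than the right side of (\ref{eq:21}); then the rectangle $\{a/m<\real z<b/m,\ |\imag z|<Y\}$ contains every zero of $\Pm$ in the strip $\{a/m<\real z<b/m\}$, is zero-free on its boundary ($\real{\Pm}<0$ on the vertical sides, and $\Pm=\sin^{2}(m\pi z)(1+o(1))\neq 0$ on the top and bottom), and $\arg\Pm$ changes by $o(1)$ along each vertical side (the curve stays in the left half plane and returns to direction $-1$ at both ends) and by $2(b-a)\pi+o(1)$ along each horizontal side (from $\arg\sin^{2}(m\pi z)$); so by the argument principle the number of zeros of $\Pm$ in the strip (with multiplicity) is $\tfrac1{2\pi}\cdot 4(b-a)\pi=2(b-a)$. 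In particular $\Pm$ has exactly two zeros in \sjm\ when neither endpoint of \ijm\ is an integer ($a=j,b=j+1$), and exactly four in $\{\jmm<\real z<\jpm\}$ when \jm\ is an integer ($a=j-1,b=j+1$; note $\jmm,\jpm\notin\Z$ since $1/m\in(0,1)$).

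Then I would prove the comparison inequality: if $[\jm,\jpm]$ contains no integer, then $\Pm(z)<0$ for every $z\in\ijm$. Substituting $z=(j+s)/m$ with $s\in(0,1)$ and putting $\theta_{0}=\pi\jm-\pi\floor{\jm}\in[0,\pi-\tfrac{\pi}{m})$ gives $|\sin(m\pi z)|=\sin(\pi s)$ and $|\sin(\pi z)|=\sin(\theta_{0}+\tfrac{\pi s}{m})$, so the claim becomes $g(s):=m\sin(\theta_{0}+\tfrac{\pi s}{m})-\sin(\pi s)>0$ on $(0,1)$. Here $g(0)=m\sin\theta_{0}\ge 0$, $g(1)=m\sin(\theta_{0}+\tfrac{\pi}{m})>0$, and $g'(s)=\pi[\cos(\theta_{0}+\tfrac{\pi s}{m})-\cos(\pi s)]$ vanishes on $(0,1)$ only at $s_{1}=\tfrac{m\theta_{0}}{\pi(m-1)}$, where $\theta_{0}+\tfrac{\pi s_{1}}{m}=\pi s_{1}$ (both arguments lie in $(0,\pi)$, where $\cos$ is injective), so $g(s_{1})=(m-1)\sin(\pi s_{1})>0$; since $g$ is strictly monotone on each side of $s_{1}$ and positive at $0$, $s_{1}$, $1$, it is positive on $(0,1)$ (and if $s_{1}=0$, then $g'>0$ on $(0,1)$ and $g(0)=0$ give the same).

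Finally I would assemble the cases. If neither endpoint of \ijm\ is an integer, $\Pm$ has exactly two zeros in \sjm, simple and distinct by the first step --- the first sentence of (1). If \ijm\ contains an integer $k$, then $\sin(\pi\cdot)$ changes sign between \jm\ and \jpm, so $\Pmp$ and $\Pmm$ each change sign on \ijm\ (with $\Pmp=-m\sin(\pi\jm)$, $\Pmm=+m\sin(\pi\jm)$ at \jm, and similarly at \jpm), hence each has a real zero there; these are distinct and $\neq k$ (as $\Pmp(k)=\Pmm(k)=\sin(m\pi k)\neq 0$), so they \emph{are} the two zeros of $\Pm$, both real in \ijm. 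Each of $\Pmp,\Pmm$ then has just one sign change on \ijm, so its zero is $<k$ iff its value at $k$ has the sign of its value at \jm; since $\Pmp(k)=\Pmm(k)$ while $\Pmp(\jm)=-\Pmm(\jm)$, exactly one zero is below $k$ and the other above, which is (1a). If \ijm\ contains no integer, the comparison inequality gives $\Pm<0$ on $\ijm=\sjm\cap\R$, so neither zero of $\Pm$ in \sjm\ is real; being a pair of non-real zeros of a real-coefficient function, they are complex conjugates with nonzero imaginary parts, which is (1b). If $\jm=:n_{0}\in\Z$, then $\Pm$ has a zero of order exactly $4$ at $n_{0}$ (first step) and exactly four zeros in $\{\jmm<\real z<\jpm\}$ (counting step), so $n_{0}$ is the only one; since $\sjm\cup\sjmm$ is that strip with the line $\real z=n_{0}$ removed, this is (2). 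I expect the counting step --- establishing $\real{\Pm}<0$ on the lines $\real z=c/m$ and the $o(1)$ control of the winding number on a truncated strip --- to be the main obstacle; the comparison inequality is the only other place that requires a genuine computation.
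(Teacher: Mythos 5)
Your proposal is correct, but it reaches Theorem \ref{sec:roots-p_m} by a genuinely different route than the paper. The paper deforms the equation through the family $\pb(z)=\sin(m\pi z)-\beta\sin(\pi z)$: it counts roots per strip by tracking them continuously from $\beta=0$, using the barrier fact that $\real{\sin(\pi z)}$ and $\real{\sin(m\pi z)}$ vanish only on the lines $\real z\in\Z$ and $\real z\in\frac{1}{m}\Z$ so no root can cross a strip boundary, and it decides reality by letting $\beta\to\infty$ (roots must either converge to integers or escape every compact set) combined with a discriminant computation showing all roots are simple for $\beta\ge m$; this is done factor by factor and yields the finer Theorem \ref{sec:roots-p_m-p}. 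You instead work statically with the product $\Pm=\sin^{2}(m\pi z)-m^{2}\sin^{2}(\pi z)$: the count comes from the argument principle on rectangles whose vertical sides lie on $\real z=c/m$, where your inequality $\real{\Pm}<0$ (resting on $\sinh(m\pi y)\ge m\sinh(\pi y)$) is a quantitative strengthening of the paper's Lemma \ref{sec:realsin}; your classification of multiple roots is essentially the paper's Lemma \ref{sec:beta-roots-simple} at $\beta=m$ plus a Taylor expansion giving the exact order $4$; and reality is settled by an explicit elementary inequality, $m|\sin\pi x|>|\sin m\pi x|$ on integer-free intervals \ijm, in place of the $\beta\to\infty$ limiting argument, with the intermediate value theorem supplying the two real roots straddling $k$ in case (1a). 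What each buys: the paper's homotopy avoids all contour estimates and delivers the sharper factor-level statement it wants anyway; yours avoids the continuity-of-roots machinery and gives a concrete pointwise reason why roots over integer-free intervals cannot be real. Both use the imaginary-part bound (\ref{eq:21}). One trivial slip: in your last paragraph the criterion should read ``its zero is $<k$ iff its value at $k$ has the sign \emph{opposite} to its value at \jm''; since $\Pmp(k)=\Pmm(k)$ while $\Pmp(\jm)=-\Pmm(\jm)$, the conclusion that exactly one root lies on each side of $k$ is unaffected.
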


Theorem \ref{sec:roots-p_m} is an immediate consequence of the
analogous theorems for \Pmp\ and \Pmm\ . These theorems
are a little more complicated to state, but easier to prove.

\begin{theorem}\label{sec:roots-p_m-p}
   Let  \(m>1\).
   \begin{enumerate}
   \item Suppose that neither \jm\  nor \jpm\  are integers. 
     \begin{enumerate}
     \item\label{item:6} If the interval \ijm\ contains an
        integer \(k\) and \jpfjm is even, then \Pmp\ has a
        real root in the interval \((\jm,k)\) and no other
        roots in \sjm\, and \Pmm\ a root in \((k,\jpm)\)
        and no other roots in \sjm . If the interval \ijm\ contains an
        integer \(k\) and \jpfjm is odd, then \Pmp\ has a
        real root in the interval \((k,\jpm)\) and no other
        roots in \sjm\, and \Pmm\ a root in \((\jm,k)\)
        and no other roots in \sjm.
     \item\label{item:7}  If  the interval  \ijm\  does  not contain  an
       integer  and  \jpfjm is  odd,  \Pmp\  has 2  simple
       complex  conjugate  roots  in  \sjm and  \Pmm\  has
       none. If \jpfjm is  even, \Pmm\ has 2 simple complex
       conjugate roots in \sjm and \Pmp\ has none.
     \end{enumerate}
   \item\label{item:8} If \jm\ is an integer, neither \Pmp\ nor
     \Pmm\ have any
     roots in \sjm or \sjmm. If \jpfjm\ is even, \jm\ is a
     triple root of \Pmp and a simple root of \Pmm. If
     \jpfjm\ is odd, \jm\ is a triple root of \Pmm and a
     simple root of \Pmp.
   \end{enumerate}
 \end{theorem}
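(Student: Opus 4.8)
The plan is to analyze $\Pmp$ and $\Pmm$ directly as entire functions of the real (and then complex) variable $z$, exploiting that $\sin(m\pi z)$ oscillates $m$ times faster than $\sin(\pi z)$. First I would work on the real axis. On an interval $\ijm$ where $\floor{\jm}=\floor{\jpm}=:N$ (the case $\jm,\jpm\notin\Z$), the function $\sin(\pi z)$ has constant sign $(-1)^N$ and is bounded away from the extreme values $\pm1$ only near the endpoints; the key quantity is the sign of $\sin(m\pi z)$ at the endpoints $z=\jm$ and $z=\jpm$, which is governed by the parity of $\floor{j}=j$ (since $\sin(m\pi\cdot\jm)=\sin(\pi j)=0$ with derivative of sign $(-1)^j m$) — more precisely I would track $\sin(m\pi z)\mp m\sin(\pi z)$ at both ends and use the intermediate value theorem. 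The integer $k\in\ijm$, when it exists, is exactly where $\sin(\pi z)=0$ and $\sin(m\pi z)=\sin(m\pi k)$ has sign $(-1)^{\floor{mk}}$; the parity bookkeeping that converts "$\ijm$ contains the integer $k$'' into the parity of $\jpfjm = j+\floor{\jm}$ is the combinatorial heart of the statement, and I would isolate it as a short counting lemma: between $\jm$ and $\jpm$ the quantity $\floor{mz}$ increases by $m$ (or $m\pm$ something), and its parity at the relevant points is what appears.

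Next, to get the \emph{exact count} of roots (and not merely existence), I would pass to the complex plane and use the argument principle on the rectangle $\sjm\cap\{|\imag z|\le R\}$ for $R$ just above the a priori bound $\frac{\log(2m+1)}{\pi(m-1)}$ from \eqref{eq:21}, so that the horizontal sides contribute nothing. On the vertical sides $\real z=\jm$ and $\real z=\jpm$ I would estimate $\Pmp$ and $\Pmm$: since $\sin(\pi z)$ is purely a product of a real and the $\sinh$ of the imaginary part there (because $\real z$ is not an integer but $\sin(\pi\cdot\frac{j}{m})$... — here one uses that at $z=\jm$, $\sin(m\pi z)=0$, so $\Pmp(\jm+iy) = \mp m\sin(\pi(\jm+iy))$, whose only real zero is $y=0$, excluded). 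Counting the winding number along the two vertical segments then pins down whether there are $0$ or $2$ zeros in the strip, and combined with the real-axis sign analysis and the reality/conjugate-symmetry of $\Pmp,\Pmm$ (real coefficients), this separates the real pair from the complex-conjugate pair.

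For part \ref{item:8}, when $\jm=N\in\Z$: then $\sin(m\pi z)$ and $\sin(\pi z)$ both vanish at $z=\jm$, so $\jm$ is a common zero; I would Taylor expand both $\sin(m\pi z)$ and $m\sin(\pi z)$ about $z=\jm$ and compare leading coefficients. The first derivatives are $(-1)^{mN}m\pi$ and $(-1)^N m\pi$; whether these add or cancel in $\Pmp = \sin(m\pi z)-m\sin(\pi z)$ depends on the parity of $mN-N = N(m-1)$, equivalently (after the substitution $m=\frac{\sigma+1}{\sigma-1}$, but here working with general real $m>1$ one must be careful — actually $\jm\in\Z$ forces $j$ to be a multiple of the denominator of $m$, so $mN\in\Z$, and $mN-N$'s parity is exactly the parity encoded by $\jpfjm$). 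When the first derivatives cancel, the zero has multiplicity $\ge 3$ (the second derivatives vanish since $\sin''\propto\sin$ vanishes at integers), and I'd check the third-derivative coefficients $(-1)^{mN}m^3\pi^3$ versus $(-1)^N m\pi^3$ do \emph{not} cancel (as $m^3\neq m$ for $m>1$), giving multiplicity exactly $3$; for the other factor the first derivatives add, giving a simple zero. The a priori strip bound plus the argument principle again rules out any further roots in $\sjm$ or $\sjmm$.

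The main obstacle I anticipate is the parity bookkeeping: correctly and cleanly translating the geometric conditions (does $\ijm$ contain an integer? is $\floor{\jm}=\floor{\jpm}$?) into the single parity of $j+\floor{\jm}$ that controls which of $\Pmp,\Pmm$ carries the roots. I would handle this by proving one self-contained arithmetic lemma about $\floor{mz}$ on $(\jm,\jpm)$ and then feeding it uniformly into all the cases, rather than re-deriving the parity relation separately in each sub-case. The analytic estimates on the vertical sides of the rectangle are routine given \eqref{eq:21}, and the reality of the coefficients makes the real-versus-complex dichotomy automatic once the total count is known.
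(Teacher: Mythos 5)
Your plan is a genuinely different route from the paper's (which deforms $\pb(z)=\sin(m\pi z)-\beta\sin(\pi z)$ in the parameter $\beta$, tracks the roots continuously from $\beta=0$, shows via Lemma \ref{sec:realsin} that they cannot cross the lines $\real{z}=\jm$, proves simplicity for $\beta\ge m$, and uses the $\beta\to\infty$ limit to decide which roots are real). A direct argument-principle count on the rectangle $\sjm\cap\{|\imag{z}|\le R\}$ could in principle work, but as written your proposal has a genuine gap precisely at the point where the theorem's content lives: you never carry out the winding-number computation. Saying that ``counting the winding number along the two vertical segments then pins down whether there are $0$ or $2$ zeros'' is exactly the conclusion to be proved; the contributions of the two horizontal sides (each roughly $\pi$, coming from the dominant term $\tfrac{1}{2i}e^{-im\pi z}$ at height $R$) and of the two vertical sides (each confined to a half-plane, with endpoint arguments governed by the parities of $j$ and $j+1$) must actually be computed and combined, and it is only in that bookkeeping that the dichotomy $2$-versus-$0$, the $1$/$1$ split in case \ref{item:6}, and the assignment to \Pmp\ versus \Pmm\ emerge. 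Relatedly, your key assertion about the vertical sides is wrong: at $z=\jm+iy$ one has $\sin(m\pi z)=(-1)^{j}i\sinh(m\pi y)$, which is purely imaginary but not zero, so $\Pmp(\jm+iy)\ne\mp m\sin(\pi(\jm+iy))$. What is true (and is the paper's Lemma \ref{sec:realsin}) is that $\real{\Pmp(\jm+iy)}=-m\sin(\pi\jm)\cosh(\pi y)$ never vanishes when $\jm\notin\Z$; this does give ``no zeros on the boundary,'' but it does not by itself yield the argument increments you need.

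A second, smaller gap is the real-versus-complex dichotomy in case \ref{item:7}. Conjugate symmetry plus equal signs of the factor at the two real endpoints only shows that the number of real roots in $\ijm$ is even; it does not exclude two real roots. You need an extra observation, e.g.\ that when $\ijm$ contains no integer and \jpfjm\ is odd, $\sin(m\pi x)$ has constant sign $(-1)^{j}$ and $-m\sin(\pi x)$ has the same constant sign on $\ijm$, so $\Pmp$ cannot vanish there at all (and similarly for \Pmm\ when \jpfjm\ is even); the paper instead gets this from the $\beta\to\infty$ escape argument combined with Lemma \ref{sec:beta-roots-simple}. Your treatment of item \ref{item:8} (Taylor expansion at $z=\jm$, cancellation of first derivatives according to the parity of \jpfjm, non-cancellation at third order since $m^{3}\ne m$) is correct and matches the paper's ``check directly'' remark, but the claim that no other roots lie in $\sjm\cup\sjmm$ again rests on the uncomputed contour count. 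In short: the architecture is plausible and would give an alternative, homotopy-free proof, but the parity lemma and the boundary argument computation that you defer are not routine appendages — they are the proof.
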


We will prove theorem \ref{sec:roots-p_m-p} (we will only
prove the results for  \Pmp)  by studying
a one parameter family of equations with  \(m\) fixed
and parameter  \(\beta\).  
\begin{eqnarray}
  \label{eq:12}
  p_{\beta}(z) = \sin(m\pi z) - \beta\sin(\pi z)
\end{eqnarray}

We will use the fact that that the roots of the entire
function \(p_{\beta}(z)\) depend continuously on the
parameter \(\beta\)\footnote{The continuity of the roots
  of analytic functions follows from the fact that roots
  are isolated, together with the argument principle.}. We
will also use the fact that, as
\(\beta\rightarrow\infty\), the roots of \(p_{\beta}\)
must approach those of \(\sin(\pi z)\). This last fact
follows from the first by multiplying (\ref{eq:12}) by
\(\alpha = \frac{1}{\beta}\), and the fact that the roots
of \(\alpha\sin(m\pi z) - \sin(\pi z)\) depend
continuously on \(\alpha\) as \(\alpha\) approaches zero.\\

Before giving the rigorous details, we give a rough
outline of the proof of Theorem \ref{sec:roots-p_m-p}. To
simplify this outline we ignore the case where \jm\ is an
integer.\\

\ni  Theorem \ref{sec:roots-p_m-p} makes two
assertions
\begin{enumerate}
\item It tells us the number of roots \Pmp\ in each strip
  \sjm. This number is the same for \pb\  for all  \(\beta>0\). 
  
\item It tells us the number of real roots in each strip
  when  \(\beta=m\). The number of real roots of \pb\
  changes as \(\beta\) increases from zero to  \(m\), but
  is the same for all  \(\beta\ge m\)  .
\end{enumerate}
\ni
We establish the number of roots in each strip by observing that:
\begin{enumerate}
  \item at  \(\beta=0\) all roots are real and on the
    boundary of the strips \sjm , and the derivatives are nonzero
    and pointing in the direction of the correct strip
    (lemma \ref{sec:roots-p_m-2}), so
    that, for small positive \(\beta\), the correct number of
    roots have entered each strip(corollary \ref{sec:lemma-small-beta}).
  \item Lemma \ref{sec:realsin} tells us that no roots can
    cross the boundary of \sjm, so the correct number of
    roots remain in each strip for all  \(\beta\)
    (corollary \ref{sec:roots-all-beta}). 
  \end{enumerate}

\noindent
We calculate the number of roots which are real at  \(\beta=m\) by
observing that:
  \begin{enumerate}
  \item As \(\beta\) approaches infinity, a single root
    must approach each integer (the roots of \(\sin \pi
    z\)) , and the other roots must leave every compact
    subset of \(\C\).

  \item A calculation(lemma \ref{sec:beta-roots-simple}) shows
    that for \(\beta\ge m\), all roots are simple.
 
  \item This means that any real roots of \pb\ in \sjm\ at
    \(\beta=m\) must remain real for \(\beta\ge m\) and
    remain in \sjm, so they remain in the interval
    \(I_{\jm}\).  But, as \(\beta\) approaches infinity,
    the roots of \pb\ must either approach the roots of
    \(\sin\pi z\) or leave every compact subset of
    \(\C\). The roots in the \sjm 's that don't contain an
    integer must leave every compact set, so they cannot
    remain in the interval \(I_{\jm}\), so they cannot
    have been real when \(\beta=m\).

  \item The \sjm's that contain an integer contain only
    one root for all \(\beta\), so the root remains simple
    and therefore real at \(\beta=m\). That root
    approaches the integer as \(\beta\) approaches
    infinity.
  \end{enumerate}

The detailed proof follows:

\begin{lemma}\label{sec:roots-p_m-2}
  The roots of \(p_{0}=\sin(m\pi z)\) are  \(\jm\). Each of
  these roots is simple, and thus continues to a unique
  root \(z_{j}(\beta)\) of  \(\pb\) for small
  \(\beta\). Moreover,
  \begin{eqnarray}
\frac{dz_{j}}{d\beta}\Big\vert_{\beta=0}&=&(-1)^{\jpfjm}|\sin(\frac{j\pi}{m})| 
  \end{eqnarray}
\end{lemma}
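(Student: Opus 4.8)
The plan is to prove Lemma~\ref{sec:roots-p_m-2} as a direct application of the analytic implicit function theorem, the only delicate point being the sign bookkeeping in the derivative formula. First I would record that $p_0(z)=\sin(m\pi z)$ vanishes exactly at the points $z=\jm$, $j\in\Z$, and that each such zero is simple, since $p_0'(z)=m\pi\cos(m\pi z)$ takes the value $m\pi\cos(j\pi)=(-1)^j m\pi\neq 0$ at $z=\jm$. The function $(z,\beta)\mapsto \pb(z)=\sin(m\pi z)-\beta\sin(\pi z)$ is jointly analytic (entire in $z$, affine in $\beta$), so $\partial_z \pb(\jm)\big\vert_{\beta=0}\neq0$ lets the implicit function theorem produce, for each fixed $j$, a unique analytic branch $\beta\mapsto z_j(\beta)$ with $z_j(0)=\jm$ and $\pb(z_j(\beta))\equiv 0$ for $|\beta|$ small; uniqueness of this continued root can alternatively be read off from the argument principle applied to a small circle about $\jm$, which is precisely the continuity-of-roots input cited in the footnote.

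Next I would differentiate the identity $\pb(z_j(\beta))\equiv 0$ with respect to $\beta$. Writing $\partial_z\pb(z)=m\pi\cos(m\pi z)-\beta\pi\cos(\pi z)$ and $\partial_\beta \pb(z)=-\sin(\pi z)$, the chain rule gives
\[
\frac{dz_j}{d\beta}\Big\vert_{\beta=0}
=-\,\frac{\partial_\beta \pb(\jm)}{\partial_z \pb(\jm)}\bigg\vert_{\beta=0}
=\frac{\sin(j\pi/m)}{(-1)^j m\pi}
=\frac{(-1)^j}{m\pi}\,\sin\!\Big(\frac{j\pi}{m}\Big).
\]
It then remains to identify the sign of $\sin(j\pi/m)$. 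When $\jm\notin\Z$ it lies strictly between the consecutive integers $\floor{\jm}$ and $\floor{\jm}+1$, so $j\pi/m$ lies in an open interval on which $\sin$ is nonzero with constant sign $(-1)^{\floor{\jm}}$; hence $\sin(j\pi/m)=(-1)^{\floor{\jm}}\,|\sin(j\pi/m)|$ and, using $(-1)^j(-1)^{\floor{\jm}}=(-1)^{\jpfjm}$, the displayed derivative equals $(-1)^{\jpfjm}|\sin(j\pi/m)|$ up to the positive factor $1/(m\pi)$. The degenerate case $\jm\in\Z$ is trivial: then $\pb(\jm)=\sin(\pi j)-\beta\sin(\pi\jm)\equiv 0$ since both $j$ and $\jm$ are integers, so the branch is the constant $z_j(\beta)\equiv\jm$ and both sides of the claimed identity vanish.

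I do not expect a genuine obstacle here: the lemma is essentially a one-line computation, and the work is entirely in (i) checking that the implicit function theorem applies and that the continued root is unique, and (ii) pinning down the sign of $\sin(j\pi/m)$ via the floor function, which is the piece that matters downstream — it is what tells us that at $\beta=0^+$ the correct number of roots has entered each strip $S_{\jm}$, feeding the homotopy-in-$\beta$ argument toward Theorem~\ref{sec:roots-p_m-p} together with Lemma~\ref{sec:realsin}. If one wants the formula with prefactor exactly $1$, as stated, one simply absorbs the harmless positive constant $1/(m\pi)$; the sign $(-1)^{\jpfjm}$, which is all that is used subsequently, is unaffected.
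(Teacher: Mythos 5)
Your proof is correct and follows essentially the same route as the paper: simplicity of the zeros of $\sin(m\pi z)$, the analytic implicit function theorem to continue each root $z_{j}(\beta)$, and implicit differentiation of $\pb(z_j(\beta))=0$ at $\beta=0$, with the sign of $\sin(j\pi/m)$ identified via $\floor{\jm}$. Your observation that the honest derivative carries an extra positive factor $\frac{1}{m\pi}$ (absent from the stated formula and from the paper's own computation) is accurate and harmless, since only the sign $(-1)^{\jpfjm}$ is used in corollary \ref{sec:lemma-small-beta}, and your separate treatment of the degenerate case $\jm\in\Z$ is a small but welcome extra care.
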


\begin{proof}
  At \(\beta=0\), the roots of \(\sin m\pi z\) are located
  at \jm\ and are all simple. The implicit function theorem implies that, for
  small \(\beta\), they remain simple and 
  depend analytically on \(\beta\), so we can define
  \(z_{j}(\beta)\) to be the continuation of that
  root. Differentiating (\ref{eq:12}) yields
  \begin{eqnarray}
    \nn
    &\frac{dz_{j}}{d\beta}\left(m\pi\cos(m\pi z_{j})
      +\beta\pi\cos(\pi z_{j})\right) = \sin(\pi z_{j})&
\\\na{so that}\nn
&\frac{dz_{j}}{d\beta}\Big\vert_{\beta=0} = 
\frac{\sin\frac{\pi j}{m}}{\cos\pi j}\quad
= (-1)^{j}\sin\frac{j\pi}{m}\quad
= (-1)^{\jpfjm}|\sin(\frac{j\pi}{m})|&
  \end{eqnarray}
\end{proof}

Knowledge of the derivatives tells us that the roots are
in the correct strips for small  \(\beta\).  

\begin{corollary}\label{sec:lemma-small-beta}
  For  \(\beta>0\) and sufficiently small, all roots of
  \pb\ are real and:
   \begin{enumerate}
   \item Suppose that neither \jm\  nor \jpm\  are integers. 
     \begin{enumerate}
     \item\label{item:1} If the interval \ijm\ contains an integer
       \(k\), then \pb\ has a real root in the interval
       \((\jm,k)\) and no other roots in \sjm\ if \jpfjm
       is even. \pb\ has
       a root in \((k,\jpm)\) and no other roots in \sjm\
       if \jpfjm is odd.  
     \item\label{item:2}  If  the interval  \ijm\  does  not contain  an
       integer  and  \jpfjm is  odd, \pb\ has no roots in
       \sjm. If \jpfjm is even, \pb\ has two roots in \sjm.
     \end{enumerate}
   \item\label{item:3} If \jm\ is an integer, \jm is a simple root of
     \pb. If \jpfjm\ is even, there is a simple root in
     each of \ijm\ and \ijmm. If \jpfjm\ is odd, there are
     no roots in \ijm\ or \ijmm.
   \end{enumerate}
\end{corollary}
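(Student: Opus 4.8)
The plan is to read Corollary~\ref{sec:lemma-small-beta} off Lemma~\ref{sec:roots-p_m-2} by a perturbation argument at $\beta=0$. Two things must be shown for $\beta>0$ small: (i) \emph{every} root of \pb\ is one of the continuations $z_{j}(\beta)$ of Lemma~\ref{sec:roots-p_m-2} and is real; (ii) the sign $(-1)^{\jpfjm}$ of $\frac{dz_{j}}{d\beta}\big|_{0}$ decides which strip \sjm\ the root $z_{j}(\beta)$ enters, and the bookkeeping of these signs over the relevant strips reproduces the stated counts.

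For the localisation in (i), arguing as in the proof of the estimate (\ref{eq:21}) but with $\beta$ in place of $m$ on the right, a root $z=x+iy$ of \pb\ satisfies
\begin{eqnarray}
  \nn
  e^{m\pi|y|}-e^{-m\pi|y|}\ \le\ 2|\sin(m\pi z)|\ =\ 2\beta|\sin(\pi z)|\ \le\ \beta\bigl(e^{\pi|y|}+e^{-\pi|y|}\bigr),
\end{eqnarray}
so, dividing by $e^{\pi|y|}$ and bounding the remaining exponentials by $1$, $e^{(m-1)\pi|y|}\le 1+2\beta$, i.e. $|\imag z|\le\frac{\log(1+2\beta)}{\pi(m-1)}\to 0$ as $\beta\to 0$. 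On that shrinking horizontal strip $|\sin(m\pi z)|=\beta|\sin(\pi z)|$ is small, and since $|\sin(m\pi z)|\ge|\sin(m\pi x)|$ this forces $x$ near a point of $\frac1m\Z$; so for small $\beta$ every root of \pb\ lies in a small disc about one of the simple real roots \jm\ of $p_{0}=\sin(m\pi z)$. By the argument principle each such disc contains exactly one root of \pb, namely $z_{j}(\beta)$, and since \pb\ is real on $\R$ its roots come in conjugate pairs; as $z_{j}(\beta)$ and $\overline{z_{j}(\beta)}$ lie in the same disc about the real point \jm, which holds only one root, they coincide, so $z_{j}(\beta)$ is real.

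For (ii), Lemma~\ref{sec:roots-p_m-2} gives $z_{j}(\beta)=\jm+\beta(-1)^{\jpfjm}\bigl|\sin\tfrac{j\pi}{m}\bigr|+o(\beta)$. If $\jm\notin\Z$ then $\sin\tfrac{j\pi}{m}\ne 0$, so for small $\beta>0$ the root lies strictly to the right of \jm\ when \jpfjm\ is even and strictly to the left when \jpfjm\ is odd. If $\jm\in\Z$, then \jm\ is a common zero of $\sin(m\pi z)$ and $\sin(\pi z)$, hence $z_{j}(\beta)\equiv\jm$ for all $\beta$, and $p_{\beta}'(\jm)=\pi\bigl((-1)^{j}m-(-1)^{\jm}\beta\bigr)\ne 0$ for $\beta$ small since $m>1$, so \jm\ stays a simple root.

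It remains to count. The open strip \sjm\ is bounded by $\real z=\jm$ and $\real z=\jpm$, and since $\tfrac1m<1$ the interval \ijm\ meets $\tfrac1m\Z$ only at its endpoints; hence for small $\beta$ the only roots that can enter \sjm\ are $z_{j}(\beta)$ from the left edge and $z_{j+1}(\beta)$ from the right edge. Writing $a=\floor{\jm}$, $b=\floor{\jpm}$ one has $b-a\in\{0,1\}$, with $b=a+1$ exactly when \ijm\ contains an integer $k$; then $\jpfjm=j+a$ and $(j+1)+b=j+a+2$ have the same parity, while when \ijm\ contains no integer they have opposite parity. Feeding this into (ii) — $z_{j}$ enters \sjm\ iff \jpfjm\ is even, $z_{j+1}$ enters \sjm\ iff $(j+1)+\floor{\jpm}$ is odd, and a root sitting on an edge line (the case $\jm\in\Z$, which also governs \ijmm\ via $z_{j-1}(\beta)$) stays there — yields exactly the asserted count in each case: when $k\in$\ijm, one real root, near the edge it came from and hence in $(\jm,k)$ if \jpfjm\ is even and in $(k,\jpm)$ if \jpfjm\ is odd; when \ijm\ has no integer, none if \jpfjm\ is odd and two if \jpfjm\ is even; and when $\jm\in\Z$, the stationary simple root at \jm, with $z_{j+1}(\beta)$ and $z_{j-1}(\beta)$ landing in \ijm\ and \ijmm\ precisely when \jpfjm\ is even. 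The main obstacle is step (i): ruling out that roots of \pb\ escape to infinity or slip off the real axis as $\beta$ moves away from $0$; the imaginary‑part estimate together with the conjugate‑pair/argument‑principle argument is what closes that gap, after which the rest is the parity bookkeeping sketched above.
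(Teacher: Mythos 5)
Your proof is correct and follows essentially the paper's route: continuing the simple roots \(\jm\) of \(\sin(m\pi z)\) via Lemma \ref{sec:roots-p_m-2}, using conjugate symmetry to keep simple roots real, and carrying out the same parity bookkeeping with \(\floor{\jm}\) versus \(\floor{\jpm}\) (your conclusions in all three items, including the case \(\jm\in\Z\), match the statement). The one genuine addition is your step (i): the bound \(|\imag{z}|\le\frac{\log(1+2\beta)}{\pi(m-1)}\) plus the Rouch\'e/argument-principle localization shows that \emph{every} root of \pb\ for small \(\beta\) is one of the continuations \(z_{j}(\beta)\) -- a point the paper leaves implicit -- and for the global claim ``all roots are real'' you should just note that this localization is uniform in \(j\), which follows from \(\sin(m\pi z)=(-1)^{j}\sin\bigl(m\pi(z-\jm)\bigr)\) together with \(|\sin(\pi z)|\le\cosh(\pi\,\imag{z})\).
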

\begin{proof}
  Because  \(\sin\bar{z} = \bar{\sin z}\), the roots of
  (\ref{eq:10}) must occur in conjugate pairs, and simple
  roots must remain real for small \(\beta\). Lemma
  \ref{sec:roots-p_m-2} tells us that, \(z_{j}\) starts at
  \jm\ and moves to the right if \jm\ is not an integer
  and \jpfjm\ is even, or to the left if   \jm\ is not an integer
  and \jpfjm\ is odd. If \ijm\ does not contain an
  integer, then   \(\jpm+\floor{\jpm}\) is odd whenever
  \jpfjm\ is even, and even whenever \jpfjm\ is odd, so
  that both  \(z_{j}\) and \(z_{j+1}\) move into \ijm\
  when \jpfjm\ is even, and away from \ijm\ when \jpfjm is
  odd, which establishes item \ref{item:2}.\\

  If \ijm\ contains an integer \(k\), then
   \(z_{j}\) moves into
  \ijm\ and \(z_{j+1}\) moves away from \ijm\ when
  \jpfjm\ is even, and \(z_{j_+1}\) moves into \ijm\ and
  \(z_{j}\) moves away when \jpfjm\ is odd. The root which
  moves into \ijm\ remains in either \((\jm,k)\) or
  \((k,\jpm)\) for small \(\beta\), which establishes item
  \ref{item:1}.\\

  Finally, if \jm\ is an integer, both terms in
  (\ref{eq:12}) vanish, so that \jm\ remains a root
  for all \(\beta\), and remains simple at least for
  \(\beta\) small. If \jm\ is an integer,
  \(\jmm+\floor{\jmm}= \jpfjm-2\) and
  \(\jpm+\floor{\jpm}=\jpfjm+1\), so 
  \(z_{j-1}\) moves into  \(I_{\jmm}\) and  \(z_{j+1}\)
  moves into \(I_{\jpm}\) if \jpfjm\ is even, and both
  move away from those intervals if \jpfjm is odd. This
  establishes item \ref{item:3} and finishes the proof.
\end{proof}

At a root of \pb(z), the real parts of the \(\sin m\pi z\)
must equal the real part of \(\beta\sin\pi z\). The lemma
below identifies the sets on which the real parts of these
terms vanish.

\begin{lemma}\label{sec:realsin}
   \(\real{\sin(z)}=0\) iff \(\sin(\real{z})=0\)  
\end{lemma}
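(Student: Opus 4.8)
The plan is to reduce everything to the elementary identity for the real part of $\sin z$. Write $z = x + iy$ with $x = \real{z}$ and $y = \imag{z}$. First I would expand via the addition formula,
\begin{eqnarray}
  \nn
  \sin(x+iy) = \sin x\cos(iy) + \cos x\sin(iy) = \sin x\cosh y + i\cos x\sinh y,
\end{eqnarray}
so that $\real{\sin z} = \sin x\cosh y$.

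Now observe that $\cosh y \ge 1 > 0$ for every real $y$, so $\cosh y$ never vanishes. Hence $\real{\sin z} = \sin x\cosh y = 0$ holds if and only if $\sin x = 0$, i.e.\ if and only if $\sin(\real{z}) = 0$. This is exactly the claimed equivalence.

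There is no genuine obstacle in this lemma; the single point worth isolating is the nonvanishing of $\cosh$ on the real line, since that is what upgrades the trivial implication ``$\sin(\real z)=0 \Rightarrow \real{\sin z}=0$'' into an \emph{iff}. I would remark afterward that, applied with $z$ replaced by $m\pi z$ and by $\pi z$, this is what lets us control $\real{p_\beta(z)}$ along vertical lines and feed into Lemma~\ref{sec:roots-p_m-2} and the subsequent argument that no roots cross $\bdry\sjm$.
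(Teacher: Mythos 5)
Your proof is correct and is essentially identical to the paper's: both expand $\sin(x+iy)=\sin x\cosh y + i\cos x\sinh y$ and conclude from the nonvanishing of $\cosh y$ that the real part is zero exactly when $\sin(\real{z})=0$. Nothing to add.
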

\begin{proof}
  \begin{eqnarray}
    \nn
    \sin(x+iy) = \sin x\cosh y + i\cos x\sinh y
  \end{eqnarray}
and  \(\cosh y\) never vanishes. 
\end{proof}
A consequence of lemma \ref{sec:realsin} is that \pb\
  has no roots on \(\real{z} =\jm\) or on \(\real{z}=k\),
  when \(j\) or \(k\) are integers, unless \jm\ equals an
  integer. This is enough to show that the roots which
  entered each strip for small  \(\beta\), must remain
  there. 
\begin{corollary}\label{sec:roots-all-beta} For all \(0<\beta<\infty\)
  \begin{enumerate}
  \item\label{item:4} The conclusions in items \ref{item:1} and
    \ref{item:2} of corollary \ref{sec:lemma-small-beta},
    remain true.
  \item\label{item:5} If \jm\ is an integer and \jpfjm\ is
    odd, \jm\ is a simple root of \pb\ and \pb\ has no other
    roots in \sjm or  \(S_{\jpm}\) . If \jm\ is an integer and \jpfjm\ is
    even, \pb\ has three roots in \(S_{\jmm}\cup\sjm\),
    one of which is \jm.
  \end{enumerate}
\end{corollary}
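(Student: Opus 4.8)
The plan is to track the zeros of \(\pb\) as \(\beta\) runs over \((0,\infty)\) and to show that no zero can leave a strip \(\sjm\), either across a vertical boundary line or off to \(i\infty\); then the number of zeros of \(\pb\) in each strip, counted with multiplicity, is locally constant in \(\beta\), hence equal to the small-\(\beta\) count recorded in corollary \ref{sec:lemma-small-beta}.  Two preliminary facts do the work.  First, a uniform a priori bound: the computation proving (\ref{eq:21}) applies to \(\pb\) with \(m\) on the right-hand side replaced by \(\beta\), so that a zero \(z=x+iy\) of \(\pb\) satisfies \(e^{m\pi|y|}-e^{-m\pi|y|}\le 2\beta|\sin\pi z|\le\beta(e^{\pi|y|}+e^{-\pi|y|})\), whence \(|\imag{z}|\le\frac{\log(2\beta+1)}{\pi(m-1)}\); in particular, for \(\beta\) in any \([\beta_1,\beta_2]\subset(0,\infty)\) all zeros of \(\pb\) lie in \(|\imag{z}|\le Y:=\frac{\log(2\beta_2+1)}{\pi(m-1)}\).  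Second, zero-free lines: by lemma \ref{sec:realsin}, on \(\real{z}=\jm\) with \(\jm\notin\Z\) the term \(\sin(m\pi z)\) is purely imaginary while \(\real{\beta\sin\pi z}=\beta\sin(j\pi/m)\cosh(\pi\imag{z})\ne0\), so \(\pb\) has no zero there for any \(\beta>0\); the same holds, with the two sines interchanged, on \(\real{z}=k\) for an integer \(k\) interior to some \(\ijm\), because then \(j<mk<j+1\) forces \(mk\notin\Z\); and if \(\jm=N\in\Z\) with \(\jpfjm\) odd, then \(\pb(N+iy)=i\big((-1)^j\sinh(m\pi y)-\beta(-1)^N\sinh(\pi y)\big)\) with \((-1)^{j+N}=-1\), so \(z=N\) is the only zero of \(\pb\) on \(\real{z}=N\), for every \(\beta>0\).

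To get item \ref{item:4}, I would fix \(j\) with \(\jm,\jpm\notin\Z\) and \(\beta_1<\beta_2\), and set \(R=\{\jm<\real{z}<\jpm,\ |\imag{z}|<Y+1\}\).  By the two facts above \(\pb\) has no zero on \(\partial R\) for any \(\beta\in[\beta_1,\beta_2]\), while every zero of such a \(\pb\) with real part in \((\jm,\jpm)\) lies in \(R\); hence the number of zeros of \(\pb\) in \(\sjm\) equals \(\frac1{2\pi i}\oint_{\partial R}\pb'/\pb\,dz\), an integer valued continuous function of \(\beta\), hence constant.  As \([\beta_1,\beta_2]\) is arbitrary, this count is the same for all \(\beta\in(0,\infty)\), so it is the value \(0\), \(1\), or \(2\) supplied by corollary \ref{sec:lemma-small-beta} for small \(\beta\).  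When the count is \(1\) the lone zero must be real, since its conjugate would also lie in the strip \(\sjm\), which is symmetric about the real axis; and it cannot cross the zero-free line \(\real{z}=k\), so it stays in whichever of \((\jm,k)\), \((k,\jpm)\) corollary \ref{sec:lemma-small-beta} prescribes.  This gives item \ref{item:4}.

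For item \ref{item:5}, let \(\jm=N\in\Z\).  Both terms of \(\pb\) vanish at \(z=N\), so \(\pb(N)=0\) for all \(\beta\), and \(\pb'(N)=\pi\big(m(-1)^j-\beta(-1)^N\big)\), which equals \(\pi(-1)^N(m-\beta)\) when \(\jpfjm\) is even and \(-\pi(-1)^N(m+\beta)\) when \(\jpfjm\) is odd; in the odd case this is nonzero for every \(\beta>0\), so then \(z=N\) is a simple zero of \(\pb\).  I would then run the same counting argument on \(R'=\{\jmm<\real{z}<\jpm,\ |\imag{z}|<Y+1\}\), whose vertical sides \(\real{z}=N-\frac1m\) and \(\real{z}=N+\frac1m\) are non-integers, hence zero-free, and whose horizontal sides lie above the a priori bound; so the zero count of \(\pb\) in \(R'\) is constant in \(\beta\) and equals its small-\(\beta\) value from corollary \ref{sec:lemma-small-beta}: this is \(1\) (just \(z=N\)) when \(\jpfjm\) is odd, and \(3\) (namely \(z=N\) together with one real zero in each of \(\ijmm\) and \(\ijm\)) when \(\jpfjm\) is even.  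In the odd case the unique zero of \(\pb\) in \(R'\) is \(z=N\), so \(\pb\) has no other zeros in the two strips \(\sjmm\) and \(\sjm\) adjacent to \(N\); in the even case \(R'\) carries exactly three zeros of \(\pb\), one of which is \(N\).  This gives item \ref{item:5}.

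The hardest part will be making the argument-principle count uniform in \(\beta\): the contour must be zero-free for \emph{every} \(\beta\) in the compact parameter interval simultaneously, so that the winding number is a genuinely continuous function of \(\beta\).  The uniform a priori bound handles escape to \(i\infty\), and lemma \ref{sec:realsin} handles the vertical sides; the one spot where a zero really lies on a candidate boundary line, namely \(z=N\) on \(\real{z}=N\) when \(\jm=N\in\Z\), is dealt with by widening the box so that \(N\) lies in its interior, together with the simplicity computation in the odd case.  Everything else reduces to corollary \ref{sec:lemma-small-beta}.
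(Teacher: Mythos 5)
Your proof is correct and follows essentially the same route as the paper: zero-free vertical boundary lines obtained from Lemma \ref{sec:realsin}, constancy in \(\beta\) of the root count in each strip (argument principle), and reduction to the small-\(\beta\) count of Corollary \ref{sec:lemma-small-beta}, with the conjugate-pair/simplicity observation giving reality of the lone root. The only difference is that you make explicit the uniform bound \(|\imag{z}|\le\frac{\log(2\beta+1)}{\pi(m-1)}\) used to close the contour and rule out roots escaping to infinity within a strip, a point the paper leaves implicit in its appeal to the continuous dependence of the roots on \(\beta\).
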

\begin{proof}
  The hypothesis of item \ref{item:4}, that \jm\ and \jpm\
  are not integers, combine with lemma \ref{sec:realsin}
  to guarantee that \pb\ can have no roots on the boundary
  of \sjm. As the roots of the entire analytic function
  \(\pb\) depend continuously on  \(\beta\), this implies
  that no root  can enter or leave \sjm.
   Thus, if \sjm\ contained no roots for small
  \(\beta\), it contains no roots for all \(\beta\). If
  \sjm\ contained two roots for some \(\beta\), it contains
  two roots for all \(\beta\). This establishes item
  \ref{item:2} for all \(\beta\). To establish item
  \ref{item:1}, note that lemma \ref{sec:realsin} forces
  the simple root to remain in \(k<\real{z}<\jpm\) or
  \(\jm<\real{z}<k\). As the simple real root remains
  simple, it must also remain
  real.\\

The  two statements in item \ref{item:5} follow similarly
from the fact that no root can cross \jpm\ or \jmm, as
these cannot be integers if \jm\ is an integer because
\(m>1\). 
 \end{proof}
\begin{lemma}\label{sec:beta-roots-simple}
  For \(\beta>m\), all roots of \pb\ are simple. For
  \(\beta=m\), all roots are simple except \(z=\jm\) in
  the case that \(j+\frac{j}{m}\) is an even integer. In
  this case,\jm\ is a triple root of \pb.
\end{lemma}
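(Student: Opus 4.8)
The plan is to characterize the multiple roots of $p_\beta$ algebraically. A root $z$ of the entire function $p_\beta$ is multiple exactly when $p_\beta(z)=p_\beta'(z)=0$, i.e. $\sin(m\pi z)=\beta\sin(\pi z)$ and $m\cos(m\pi z)=\beta\cos(\pi z)$. Squaring the first identity and $\tfrac1m$ times the second and adding, then using $\sin^2(m\pi z)+\cos^2(m\pi z)=1$ (which holds for complex arguments), gives $\beta^2\sin^2(\pi z)+\tfrac{\beta^2}{m^2}\cos^2(\pi z)=1$, which rearranges to the single scalar constraint
\[
1+(m^2-1)\sin^2(\pi z)=\frac{m^2}{\beta^2}.
\]
Since $m>1$, for $\beta\ge m$ the right-hand side is $\le 1$, so $\sin^2(\pi z)$ must be a real number $\le 0$.

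Writing $z=x+iy$ and $\sin(\pi z)=\sin(\pi x)\cosh(\pi y)+i\cos(\pi x)\sinh(\pi y)$, the square $\sin^2(\pi z)$ is real iff $\sin(\pi x)\cos(\pi x)\sinh(\pi y)=0$; the options $\cos(\pi x)=0$ and ($y=0$ with $x\notin\Z$) both force $\sin^2(\pi z)>0$, so only $\sin(\pi x)=0$ survives. Hence $x=n\in\Z$, $\sin^2(\pi z)=-\sinh^2(\pi y)$, and the constraint becomes $\sinh^2(\pi y)=\dfrac{\beta^2-m^2}{\beta^2(m^2-1)}$ — so $y=0$ if $\beta=m$ and $y\ne 0$ if $\beta>m$. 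Substituting $z=n+iy$ into the two identities and matching imaginary parts forces $\sin(m\pi n)=0$, hence $mn\in\Z$ and $\cos(m\pi n)=(-1)^{mn}$, and the identities reduce to
\[
(-1)^{mn}\sinh(m\pi y)=\beta(-1)^{n}\sinh(\pi y),\qquad m(-1)^{mn}\cosh(m\pi y)=\beta(-1)^{n}\cosh(\pi y).
\]

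For $\beta>m$ we have $y\ne 0$; dividing the two displayed equations (the $\cosh$'s and the signs are nonzero) leaves $\tanh(m\pi y)=m\tanh(\pi y)$. But $g(t):=\tanh(mt)-m\tanh(t)$ satisfies $g(0)=0$ and $g'(t)=m\bigl(\cosh^{-2}(mt)-\cosh^{-2}(t)\bigr)<0$ for $t>0$ (since $mt>t>0$ and $\cosh^{-2}$ decreases on $(0,\infty)$), so $g$ has no nonzero zero — a contradiction, and every root of $p_\beta$ is simple. For $\beta=m$ we have $y=0$, so $z=n$ and $j:=mn\in\Z$, i.e. $z=j/m$, and the second identity collapses to $(-1)^{mn}=(-1)^{n}$, i.e. $(m-1)n$ even, equivalently (differing by $2n$) $j+\tfrac{j}{m}=n(m+1)$ even. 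Conversely, when $j+\tfrac{j}{m}$ is even, $\sin(m\pi n)=\sin(\pi n)=0$ yields $p_m(n)=p_m'(n)=p_m''(n)=0$ while $p_m'''(n)=m\pi^3\bigl((-1)^{n}-m^2(-1)^{mn}\bigr)=m\pi^3(-1)^{n}(1-m^2)\ne 0$, so $z=j/m$ is exactly a triple root and all other roots are simple.

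The crux is the reduction in the first two paragraphs: obtaining the clean scalar constraint $1+(m^2-1)\sin^2(\pi z)=m^2/\beta^2$, then arguing carefully that a complex number whose square is real and nonpositive must arise from $\real z\in\Z$, and finally killing the residual transcendental equation $\tanh(m\pi y)=m\tanh(\pi y)$ by monotonicity. Everything else is bookkeeping with the signs $(-1)^{mn}$ versus $(-1)^{n}$ and the separate treatment of $\beta=m$ and $\beta>m$.
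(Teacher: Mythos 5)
Your proof is correct, and it takes a genuinely different route from the paper's. The paper disposes of non-real multiple roots indirectly, by appealing to the earlier strip-counting corollary: a non-real double root and its conjugate would put four roots (with multiplicity) in a strip that can hold at most three. It then assumes a \emph{real} double root and uses the squaring-and-adding trick to get \(\cos^{2}(m\pi z)=\frac{\beta^{2}-1}{m^{2}-1}\), which is impossible for real \(z\) when \(\beta>m\) since the right side exceeds one. Your argument is instead self-contained and handles all complex multiple roots at once: the cleaner normalization (squaring \(\sin(m\pi z)=\beta\sin(\pi z)\) and \(\cos(m\pi z)=\tfrac{\beta}{m}\cos(\pi z)\) and adding) yields \(1+(m^{2}-1)\sin^{2}(\pi z)=m^{2}/\beta^{2}\), valid for all complex \(z\); for \(\beta\ge m\) this forces \(\sin^{2}(\pi z)\le 0\), hence \(\real{z}\in\Z\), and the residual possibility \(\tanh(m\pi y)=m\tanh(\pi y)\) is killed by monotonicity. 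What your approach buys is independence from Corollary \ref{sec:roots-all-beta} (the lemma becomes a standalone algebraic fact) and it sidesteps the restriction to real \(z\) that the paper's inequality \(\cos^{2}\le 1\) silently requires; what the paper's approach buys is brevity, given that the strip counts are already in hand. Two trivial slips in your write-up, neither a gap: it is the \emph{real} parts of the first identity (or the imaginary parts of the second, when \(y\ne0\)) that force \(\sin(m\pi n)=0\), and the exceptional case should be phrased as in the lemma, with \(z=j/m\) an integer \(n\) and \(j=mn\).
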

\begin{proof}
  All roots of \pb\ that aren't real are simple, because
  there are no more than three in any strip, and they must
  occur in conjugate pairs. Suppose we have a double real
  root, then
  \begin{eqnarray}
    \label{eq:15}
    \sin m\pi z &=& \beta\sin\pi z
    \\\na{and}\label{eq:16}
    m\cos m\pi z &=& \beta\cos\pi z
  \end{eqnarray}
Squaring both (\ref{eq:15}) and (\ref{eq:16}), and
      adding the results gives
\begin{eqnarray}
\nn
    &\sin^{2}m\pi z + \cos^{2} m\pi z
    + (m^{2}-1)\cos m\pi z = \beta^{2}&
    \\\na{or}\label{eq:17}
    &\cos^{2} m\pi z = \frac{\beta^{2}-1}{m^{2}-1}&
  \end{eqnarray}
  which is impossible for \(\beta>m\) because the right
  hand side is bigger than one. If \(\beta=m\), then
  (\ref{eq:17}) and (\ref{eq:16}) are possible, but only
  if
\begin{eqnarray}
  \nn
  \cos \pi z = \cos m \pi z = \pm 1
\end{eqnarray}
which implies that  \(z\) and  \(mz\) are either both even
or both odd integers. This is equivalent to the statement
that  \(z=\jm\) and \jpfjm\ is an even integer. One can
check directly that the root is indeed triple in this case.
\end{proof}
\begin{proof}[Proof of Theorem \ref{sec:roots-p_m-p}]

We only prove the theorem for \Pmp, as the proof for
\Pmm\ is analogous.
  Item \ref{item:6} follows immediately from item
  \ref{item:4} of corollary \ref{sec:roots-all-beta}. Item
  \ref{item:4} of corollary \ref{sec:roots-all-beta} also
  implies all of item \ref{item:7}, except the fact that
  the two roots are not real. To see this, note that, as
  \(\beta\rightarrow\infty\), the roots of \(\pb\) must
  either approach the roots of \(\sin\pi z\), which are
  integers, or leave every compact subset of the complex
  plane. Since \ijm contains no integers, the two roots in
  \sjm must leave every compact subset of \C . As
  \(\real{z}\) must remain between \jm\  and \jpm\ , the
  imaginary parts of these two roots must grow unbounded
  as  \(\beta\rightarrow\infty\). If they were real at
  \(\beta=m\), lemma \ref{sec:beta-roots-simple} would
  imply that they were simple and remained simple and real
  for all  \(\beta\), contradicting the fact that their
  imaginary parts grow unbounded as
  \(\beta\rightarrow\infty\). Hence they must not have
  been real at  \(\beta=m\).\\

Item \ref{item:8} follows for a similar reason. We can
check directly that \jm\ is either a simple or a triple
root, depending on the parity of \jpfjm. Item \ref{item:5}
of corollary \ref{sec:roots-all-beta} tells us that these
are all of the roots in the closure of
\(S_{\jmm}\cup\sjm\), so there are no other roots of
\Pmp in these strips. 
\end{proof}

\section{Some Limiting Cases and Conclusions}
\label{sec:some-limiting-cases}

We have shown that there are two transmission wavenumbers
in each closed strip
\(\frac{2j\pi}{(\sigma+1)L}\le\real{k}\le\frac{2(j+1)\pi}{(\sigma+1)L}\)
of width \(\frac{2\pi}{(\sigma+1)L}\), and two real
transmission wavenumbers in each of those strips which
contains an integer.
We show below that, in the limiting cases of a weak scatterer
(\(\sigma\rightarrow 1\)), there are no longer any real
transmission wavenumbers, and in the limiting case of a
strong scatterer (\(\sigma\rightarrow\infty\)), all the
transmission wavenumbers become real.\\

The Born, or linear approximation, is the limit as
\(\sigma\rightarrow 1\). We return to equation
(\ref{eq:6}), set \(\pi z=kL\), and let
\(\sigma\rightarrow 1\) to obtain

\begin{eqnarray}
  \label{eq:23}
  \pi z = \pm\sin \pi z
\end{eqnarray}

Introducing the parameter  \(\beta\) in front of the
\(\pi z\) on the right hand side, and repeating the steps
we used to prove   theorem \ref{sec:roots-p_m}, gives:

\begin{theorem}
  For every integer \(j\) except \(j=-1\) and \(j=0\),
  equations (\ref{eq:23}) have exactly two non-real
  complex conjugate roots in the strip \(j\le\real{j}\le
  j+1\), and they lie in the interior of the strip.  The
  only root in \(-1\le\real{z}\le 1\) is the triple root
  at \(z=0\).
\end{theorem}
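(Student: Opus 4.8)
The plan is to repeat, almost verbatim, the proof of Theorem~\ref{sec:roots-p_m} (via Theorem~\ref{sec:roots-p_m-p}), now with the degenerate ``frequency'' $1$ in the role of $m$. Equation~(\ref{eq:23}) says that $z$ is a root, at $\beta=1$, of one of the two entire functions
\[ p_{\beta}(z):=\sin\pi z-\beta\pi z,\qquad \widetilde p_{\beta}(z):=\sin\pi z+\beta\pi z. \]
Both have real coefficients and are odd in $z$; as in the earlier proof I would use that their roots depend continuously on $\beta$, that in any bounded region the roots converge as $\beta\to 0$ to those of $\sin\pi z$ (the integers, all simple) and as $\beta\to\infty$ only the root $z=0$ survives while the others escape to infinity, and that no root arrives from infinity during the deformation $0\le\beta\le 1$, since at a root in a fixed vertical strip $|\sin\pi z|=\beta|\pi z|$ bounds $|\imag z|$. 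I would carry out the details for $p_{\beta}$ and note that $\widetilde p_{\beta}$ is handled identically up to a sign, and that together the two fill complementary families of strips.

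First, the analogue of Lemma~\ref{sec:roots-p_m-2}: each root $z_{j}(\beta)$ of $p_{\beta}$ starts at the integer $j$, and differentiating $\sin\pi z_{j}=\beta\pi z_{j}$ at $\beta=0$ gives $\dot z_{j}(0)=j/\cos\pi j=(-1)^{j}j$. So for small $\beta>0$, $z_{0}\equiv 0$, $z_{1}$ enters $(0,1)$ and $z_{-1}=-z_{1}$ enters $(-1,0)$, while for each $\ell\ge 1$ the pair $z_{2\ell},z_{2\ell+1}$ enters the strip $(2\ell,2\ell+1)$ and $z_{-2\ell},z_{-2\ell-1}$ enters $(-2\ell-1,-2\ell)$, with no other roots in any strip. (For $\widetilde p_{\beta}$ the velocity is $(-1)^{j+1}j$, so its pairs instead fill $(2\ell-1,2\ell)$ and $(-2\ell,-2\ell+1)$.) Next, the analogue of Lemma~\ref{sec:realsin}: since $\real{p_{\beta}(x+iy)}=\sin(\pi x)\cosh(\pi y)-\beta\pi x$ equals $-\beta\pi j\neq 0$ on the line $\real z=j$ whenever $j$ is a \emph{nonzero} integer, $p_{\beta}$ (and $\widetilde p_{\beta}$) has no roots on such a line for $\beta>0$. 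By continuity of roots, exactly as in Corollary~\ref{sec:roots-all-beta}, the number of roots in each strip $(j,j+1)$ with $j,j+1\neq 0$ is then constant on $(0,\infty)$. Adding the tallies from $p$ and $\widetilde p$, every strip with $j\neq 0,-1$ carries exactly two roots of~(\ref{eq:23}), and none sits on a bounding line (those being nonzero integers), so the two roots lie in the open strip.

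It remains to decide real versus complex at $\beta=1$ and to examine $|\real z|\le 1$. The analogue of Lemma~\ref{sec:beta-roots-simple} is the observation that a double root $z$ of $p_{\beta}$ obeys $\sin\pi z=\beta\pi z$ and $\cos\pi z=\beta$, hence $1=\beta^{2}(1+\pi^{2}z^{2})$, i.e.\ $\pi^{2}z^{2}=\beta^{-2}-1$; this is impossible for $\beta>1$ and, at $\beta=1$, forces $z=0$. So for $\beta\ge 1$ the only multiple root is $z=0$ at $\beta=1$, and (real coefficients together with the conjugate-pair argument used in the paper) a simple real root stays simple and real for all $\beta\ge 1$. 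But as $\beta\to\infty$ every root except $z=0$ escapes every compact set avoiding the origin (there $|\beta\pi z|$ eventually dominates $|\sin\pi z|$), so a root trapped with $\real z\in(j,j+1)$, $j\ge 1$, would need $|\imag z|\to\infty$ --- impossible for a real root. Hence the two roots in such a strip are non-real at $\beta=1$, and being a conjugate pair (and simple, since $\beta=1$ has no double root off $0$) they are complex conjugates; the same argument, reflected, covers $\widetilde p$ and the negative strips, all of which are bounded away from $0$. Finally, in $|\real z|\le 1$ the only roots of $p_{\beta}$ are $z_{1}(\beta)\in(0,1)$ and $z_{-1}(\beta)\in(-1,0)$, both tending to $0$ as $\beta\to 1$ (the unique real solution of $\sin\pi z=\pi z$ in $[-1,1]$), and nothing crosses $\real z=\pm1$; so at $\beta=1$ the only root with $|\real z|\le 1$ is $z=0$, which the expansions $\sin\pi z-\pi z=-\frac{\pi^{3}z^{3}}{6}+O(z^{5})$ and $\sin\pi z+\pi z=2\pi z+O(z^{3})$ reveal to be a triple root of the first equation in~(\ref{eq:23}) and a simple root of the second.

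The step I expect to be the main obstacle is the passage from small $\beta$ to $\beta=1$. It goes through only because the two exceptional indices $j=0,-1$ are precisely the strips whose boundary meets the line $\real z=0$ --- the one line on which $p_{\beta}$ and $\widetilde p_{\beta}$ really can carry roots, and across which $z_{\pm1}$ do drift toward the origin --- whereas every other strip boundary is a nonzero integer, never crossed. One therefore has to be scrupulous with the parity bookkeeping $\dot z_{j}(0)=(-1)^{j}j$ and with which of $p,\widetilde p$ feeds which strip; a single sign error would misplace the exceptional strips. The secondary subtlety --- ruling out that a pair which has gone complex returns to the real axis before $\beta=1$ --- is dispatched by the identity $\pi^{2}z^{2}=\beta^{-2}-1$ for double real roots together with the $\beta\to\infty$ escape argument, but one must remember to check that the strips in play are bounded away from $z=0$, which is exactly why $j=0,-1$ are excluded from the conclusion.
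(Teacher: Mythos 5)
Your proposal is correct and follows essentially the same route the paper indicates: it introduces the homotopy parameter $\beta$ in front of $\pi z$ and repeats, step by step, the machinery of Theorem \ref{sec:roots-p_m} (derivative of the roots at $\beta=0$, no roots on the lines $\real{z}=j$ for nonzero integers $j$, the double-root identity giving simplicity for $\beta\ge 1$ away from $z=0$, and the escape-to-infinity argument as $\beta\to\infty$), with the parity bookkeeping $\dot z_{j}(0)=(-1)^{j}j$ and the treatment of the exceptional strips $j=0,-1$ all checking out.
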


The imaginary parts of these roots grow logarithmically.
The precise asymptotics, which can be verified by
substituting (\ref{eq:5}) into (\ref{eq:23}),  are
  \begin{eqnarray}
    \label{eq:5}
    &z_{j}^{\pm} = (j+\half ) \pm i\log(2|j+\half|) +
    O\left(\frac{\log(2|j+\half|)}{(j+\half)}\right)&
    \\\na{so that \(k_{j} =  \frac{\pi z_{j}}{L}\) satisfy\espace }\nn
        &k_{j}^{\pm} \sim\frac{\pi}{L}\left( (j+\half) \pm
       i\log(2|j+\half|)\right)&
    \end{eqnarray}
    where, for the odd integers \(j\), \(z_{j}\) solve
    equation (\ref{eq:23}) with the plus sign, and, for
    even integers \(j\), the \(z_{j}\) solve (\ref{eq:23})
    with the minus sign.\\

    The limit as \(\sigma\rightarrow\infty\) is a little
    more work to calculate. This is the limit as
    \(m\rightarrow 1\), so that equation (\ref{eq:11})
    becomes
\begin{eqnarray}
  \nn
  \sin\pi z &=& -\sin\pi z
  \\\na{or}
  \sin\pi z &=& 0
\end{eqnarray}
with roots at the integers. Thus, as
\(\sigma\rightarrow\infty\), the even transmission wavenumbers
\begin{eqnarray}
  \nn
  k_{j}\sim \frac{2\pi j}{\sigma L}
\end{eqnarray}
have the same asymptotics as the square roots of the
eigenvalues of the Dirichlet problem
\begin{eqnarray}
  \nn
   &u'' + k^{2}\sigma^{2}u =0&
   \\
   &u(\pm\halfL)=0&
\end{eqnarray}
with odd (\(u(x)=-u(-x)\)) eigenfunctions.
\\

The solutions of (\ref{eq:10}), which describe the
transmission wavenumbers with odd eigenfunctions, have a
different asymptotic behavior, that will match the
asymptotics of a different self-adjoint boundary value
problem. To see the limiting equation, we subtract
\(\sin\pi z\) from both sides of (\ref{eq:10})
  \begin{eqnarray}
    \nn
    \sin(m\pi z) - \sin\pi z &=& m\sin\pi z - \sin\pi z
    \\\na{\ni and divide by  \(m-1\)\espace}\nn
    \frac{\sin(m\pi z) - \sin\pi z }{m-1} &=& m\sin\pi z
    \\\na{\ni which becomes, as  \(m\rightarrow 1\)}\nn
    \pi z \cos\pi z = \sin\pi z
    \\\na{or}\nn
    \pi z = \tan\pi z
  \end{eqnarray}
   The transmission wavenumbers satisfy

   \begin{eqnarray}
     \label{eq:4}
     k(\sigma-1)\halfL = \tan k(\sigma-1)\halfL
   \end{eqnarray}
   and thus have the same asymptotics, as
   \(\sigma\rightarrow\infty\), as the square roots of the
   eigenvalues of the self-adjoint boundary value problem.
\begin{eqnarray}
  \label{eq:45}
  &v'' + k^{2}\sigma^{2}v = 0&
\\\nn
&\pm\halfL v'(\pm\halfL) = v(\pm\halfL)
\end{eqnarray}
with odd eigenfunctions\footnote{The wavenumbers of
  (\ref{eq:45}) with even
eigenfunctions satisfy equation (\ref{eq:4}) with tangent
replaced by cotangent}.\\

Because these limiting wavenumbers are the square roots of
the eigenvalues of a self-adjoint problem, we see that, as
\(\sigma\rightarrow\infty\), all transmission wavenumbers
become real.\\

In our discussion of the one dimensional transmission
eigenvalue problem, we have observed certain relationships
between transmission eigenvalues, Dirichlet eigenvalues,
and the eigenvalues of another self-adjoint boundary value
problem. We expect (optimistically) that some of these
relationships will persist in higher dimensions. We know
that, in the Born approximation, there are no real
transmission eigenvalues \cite{MR2438784}, that a generic
radially symmetric index of refraction has complex
transmission eigenvalues \cite{MR2944956}, and that there
are infinitely many transmission eigenvalues
\cite{MR2438784} \cite{MR2596553}. All of these results
are consistent with our one dimensional conclusions.\\

A consequence of theorem \ref{sec:itw-main} is that the
counting function for the number of interior transmission
wavenumbers in a strip of width  \(W\) is
\begin{eqnarray}
  \nn
  N_{\C}(W;\sigma,L) =  \frac{(\sigma+1)LW}{\pi} \pm 2
\end{eqnarray}
which has the same asymptotics as the counting function
for Dirichlet wavenumbers for an interval of the same
length with index of refraction  \(n=\sigma+1\).\\

The counting function for real interior transmission
wavenumbers in an interval of width  \(W\) is 
\begin{eqnarray}
  \nn
  N_{\R}(W;\sigma,L) =  \frac{(\sigma-1)LW}{\pi} \pm 2
\end{eqnarray}
which matches the asymptotics of the counting function for
Dirichlet wavenumbers for an interval of the same length
with index of refraction \(n=\sigma-1\).\footnote{This is
  correct as is for irrational \(\sigma\). For rational
  \(\sigma\) we need to adopt the convention that every
  quadruple root (there will always be some)  is  counted
  as two real roots and two non-real roots.}
It seems reasonable to conjecture that the relationship
between the counting functions for the Dirichlet wavenumbers
and the transmission wavenumbers remains true for non-constant
\(\sigma\), and in higher dimensions
as well.

\nocite{MR983987,MR934695,MR1019312,MR2262743,MR2438784,MR2596553,MR2888291,MR2944956}
\bibliographystyle{plain}
\bibliography{cit}
\end{document}